\definecolor{parcolor}{rgb}{0.59, 0.29, 0}
\definecolor{seccolor}{rgb}{0.59, 0.29, 0}
\definecolor{distinguished}{rgb}{0, 0, 0}
\declaretheoremstyle[%
  spaceabove=.8\baselineskip,%
  spacebelow=.8\baselineskip,%
  headfont=\bfseries,%
  notefont=\normalfont,%
  bodyfont=\itshape,%
  postheadspace=.5em%
]{thms}
\declaretheoremstyle[%
  spaceabove=.8\baselineskip,%
  spacebelow=.8\baselineskip,%
  headfont=\bfseries,%
  notefont=\normalfont,%
  bodyfont=\normalfont,%
  postheadspace=.5em%
]{defn}
\numberwithin{equation}{section}
\theoremstyle{thms}
\newtheorem{thrm}[equation]{Theorem}
\newtheorem{cor}[equation]{Corollary}
\newtheorem{lem}[equation]{Lemma}
\newtheorem{prop}[equation]{Proposition}
\theoremstyle{defn}
\newtheorem{cns}[equation]{Construction}
\newtheorem{defn}[equation]{Definition}
\newtheorem*{defn*}{Definition}
\newtheorem{exa}[equation]{Example}
\newtheorem{exc}[equation]{Exercise}
\newtheorem{notn}[equation]{Convention}
\newtheorem*{notn*}{Notation}
\newtheorem{rmk}[equation]{Remark}
  \crefname{prop}{Proposition}{Propositions}
  \crefname{cor}{Corollary}{Corollaries}
  \crefname{lem}{Lemma}{Lemmata}
  \crefname{cns}{Construction}{Constructions}
  \crefname{defn}{Definition}{Definitions}
  \crefname{rmk}{Remark}{Remarks}
  \crefname{notn}{Convention}{Conventions}
  \crefname{exa}{Example}{Examples}
  \crefname{exc}{Exercise}{Exercises}
  \crefname{thrm}{Theorem}{Theorems}
  \crefname{question}{question}{questions}
  \crefname{thrmprime}{Theorem'}{Theorem'}
  \newlist{axenum}{enumerate}{1} 
  \setlist[axenum]{label=(\Alph*)}
  \crefname{axenumi}{Axiom}{Axioms}
\setlist{itemsep=.1em}
\tikzset{-,>=stealth',shorten >=2pt,shorten <=2pt,
  main node/.style={circle,fill=blue!20,inner sep=2pt,font=\sffamily\tiny\bfseries},
  desc/.style={font=\sffamily\footnotesize, align=center}
}
\newcommand{\icat}{$\infty$\nobreakdash-category\xspace}
\newcommand{\icats}{$\infty$\nobreakdash-categories\xspace}
\newcommand{\icategorical}{$\infty$-categorical\xspace}
\newcommand{\subicat}{sub-$\infty$-category\xspace}
\newcommand{\iCat}{\textup{Cat}_{\infty}}
\newcommand{\iCatst}{\textup{Cat}_{\infty}^{\textup{st}}}
\newcommand{\miCatst}{\textup{Cat}_{\infty}^{\textup{st},\otimes}}
\newcommand{\PrL}{\textup{Pr}^{\textup{L}}}
\newcommand{\PrLst}{\textup{Pr}^{\textup{L}}_{\textup{st}}}
\newcommand{\PrRst}{\textup{Pr}^{\textup{R}}_{\textup{st}}}
\newcommand{\mPrL}{\textup{Pr}^{\textup{L},\otimes}}
\newcommand{\PrR}{\textup{Pr}^{\textup{R}}}
\newcommand{\mPrst}{\textup{Pr}_{\textup{st}}^{\textup{L},\otimes}}
\newcommand{\loccit}{\textsl{loc.\,cit}.\@\xspace}
\DeclareFontFamily{U}{russian}{}
\DeclareFontShape{U}{russian}{m}{n}
        { <5><6> wncyr5
        <7><8><9> wncyr7
        <10><10.95><12><14.4><17.28><20.74><24.88> wncyr10 }{}
\DeclareSymbolFont{Russian}{U}{russian}{m}{n}
\DeclareSymbolFontAlphabet{\mathcyr}{Russian}
\let\@math@cyr\mathcyr
\renewcommand{\mathcyr}[1]{\@math@cyr{\cyracc #1}}
\newcommand{\inv}{^{-1}}
\DeclareMathOperator{\Sh}{Sh}
\DeclareMathOperator{\D}{\mathsf{D}}
\DeclareMathOperator{\Db}{\mathsf{D}^{\textup{b}}}
\DeclareMathOperator{\DM}{\mathsf{DM}}
\DeclareMathOperator{\DMB}{\mathsf{DM}_{\mathcyr{B}}}
\DeclareMathOperator{\DMBgm}{\mathsf{DM}_{\mathcyr{B}}^{\textup{gm}}}
\DeclareMathOperator{\DMc}{\mathsf{DM}^{\textup{c}}}
\DeclareMathOperator{\DMgm}{\mathsf{DM}^{\textup{gm}}}
\DeclareMathOperator{\Dbc}{\mathsf{D}^{\textup{b}}_{\textup{c}}}
\DeclareMathOperator{\Dbh}{\mathsf{D}^{\textup{b}}_{\textup{h}}}
\DeclareMathOperator{\Dbrh}{\mathsf{D}^{\textup{b}}_{\textup{rh}}}
\DeclareMathOperator{\Hm}{H}
\DeclareMathOperator{\MHM}{\mathsf{MHM}}
\newcommand{\dR}{\mathsf{R}}
\newcommand{\dL}{\mathsf{L}}
\newcommand{\Hmc}{\mathrm{H}_{\mathrm{c}}}
\DeclareMathOperator{\Hom}{Hom}
\DeclareMathOperator{\Spec}{Spec}
\newcommand{\one}{\mathbb{1}}
\newcommand{\id}{\on{id}}
\newcommand{\into}{\hookrightarrow}
\newcommand{\calg}[1]{%
  \operatorname{CAlg}(#1)}
\newcommand{\fun}[3][]{%
  \ifblank{#1}{
    \operatorname{Fun}(#2,#3)}
  {
    \operatorname{Fun}^{#1}(#2,#3)}}
\newcommand{\Mod}[2][]{%
  \ifblank{#1}{
    \operatorname{Mod}(#2)}
  {
    \operatorname{Mod}_{#1}(#2)}}
\newcommand{\map}[3][]{%
  \ifblank{#1}{
    \operatorname{Map}(#2,#3)}
  {
    \operatorname{Map}_{#1}(#2,#3)}}
\newcommand{\psh}[2][]{%
  \ifblank{#1}{
    \mathcal{P}(#2)}
  {
    \mathscr{P}_{#1}(#2)}}
\providecommand{\aone}{\ensuremath{\mathbb{A}^{1}}}
\providecommand{\pone}{\ensuremath{\mathbb{P}^{1}}}
\DeclareMathOperator{\Ind}{Ind}
\newcommand{\FF}{\mathbb{F}}
\newcommand{\FFq}{\mathbb{F}_{\!q}}
\newcommand{\Q}{\mathbb{Q}}
\newcommand{\CC}{\mathbb{C}}
\newcommand{\DD}{\mathbb{D}}
\newcommand{\Z}{\mathbb{Z}}
\newcommand{\affine}{\mathbb{A}}
\newcommand{\on}[1]{\operatorname{#1}}
\newcommand{\Cgm}{C^{\textup{gm}}}
\newcommand{\twist}[1]{\kern-3pt\left\{#1\right\}}
\newcommand{\twistns}[1]{\left\{#1\right\}}
\providecommand{\ihom}{\underline{\mathrm{Hom}}}
\providecommand{\iCat}[1]{\mathrm{Cat}_{\infty}^{#1}}
\providecommand{\iCatst}{\iCat{\mathrm{st}}}
\providecommand{\miCatst}{\mathrm{Cat}_{\infty}^{\mathrm{st},\otimes}}
\providecommand{\miCat}{\mathrm{Cat}_{\infty}^{\otimes}}
\providecommand{\mTri}{\mathsf{TrCat}^{\otimes}}
\newcommand{\CoSy}[1]{
  \operatorname{CoSy}_{#1}}
\providecommand{\cosy}[1]{\mathbf{CoSy}^{\textup{sm}}_{\mathrm{#1}}}
\providecommand{\CoSyPr}[1]{\mathbf{CoSy}^{\textup{Pr}}_{\mathrm{#1}}}
\DeclareMathOperator{\ho}{Ho}
\DeclareMathOperator{\SH}{\mathsf{SH}}
\DeclareMathOperator{\FSH}{\mathsf{FSH}}
\DeclareMathOperator{\RigSH}{\mathsf{RigSH}}
\DeclareMathOperator{\thom}{Th}
\providecommand{\Comp}[1][]{
  \mathop{\mathrm{Comp}}_{\ifblank{#1}{}{#1}}}
\newcommand{\pt}{*}
\newcommand{\FSch}[2][]{
  \ifblank{#1}{\operatorname{FSch}^{#2}}{\operatorname{FSch}_{#1}^{\text{#2}}}}
\newcommand{\RigSpc}[2][]{
  \ifblank{#1}{\operatorname{RigSpc}^{#2}}{\operatorname{RigSpc}_{#1}^{\text{#2}}}}
\newcommand{\Sch}[2][]{
  \ifblank{#1}{\operatorname{Sch}^{\text{#2}}}{\operatorname{Sch}_{#1}^{\text{#2}}}}
\newcommand{\isoto}{\overset{\sim}{\,\to\,}}
\let\xto\xrightarrow
\newcommand{\mc}[1]{\mathcal{#1}}
\newcommand{\cA}{\mc{A}}
\newcommand{\cB}{\mc{B}}
\newcommand{\cC}{\mc{C}}
\newcommand{\cD}{\mc{D}}
\newcommand{\cF}{\mc{F}}
\newcommand{\cG}{\mc{G}}
\newcommand{\cH}{\mc{H}}
\newcommand{\cM}{\mc{M}}
\newcommand{\cO}{\mc{O}}
\newcommand{\cX}{\mc{X}}
\newcommand{\resp}[1]{%
  \textup{(}resp.\ #1\textup{)}\xspace}
\providecommand{\unit}{\mathbb{1}}
\providecommand{\base}{B}
\DeclareFontFamily{U}{mathb}{\hyphenchar\font45}
\DeclareFontShape{U}{mathb}{m}{n}{
      <5> <6> <7> <8> <9> <10> gen * mathb
      <10.95> mathb10 <12> <14.4> <17.28> <20.74> <24.88> mathb12
      }{}
\DeclareSymbolFont{mathb}{U}{mathb}{m}{n}
\DeclareMathSymbol{\boxasterisk}  {2}{mathb}{"66}
\def\namedlabel#1#2{\begingroup
  \def\@currentlabel{#2}%
  \textbf{#2}
   \label{#1}\endgroup
}
\newcommand{\bref}[1]{\textbf{\ref{#1}}}
\newcommand{\kw}{motivic homotopy theory, \texorpdfstring{$\affine^1$}{A¹}-homotopy, coefficient system, six operations, six-functor formalism}
\newcommand{\ttl}{An introduction to six-functor formalisms}
\newcommand{\aut}{Martin Gallauer}
\title{\ttl}
\begin{document}
\author{\aut}
\maketitle{}
\begin{abstract}
\noindent These are notes for a mini-course given at the summer school and conference \href{https://sites.google.com/view/summer-school-motivic/home}{The Six-Functor Formalism and Motivic Homotopy Theory} in Milan~9/2021.
  They provide an introduction to the formalism of Grothendieck's six operations in algebraic geometry and end with an excursion to rigid-analytic motives.

The notes do not correspond precisely to the lectures delivered but provide a more self-contained account for the benefit of the audience and others.
  No originality is claimed.
\setcounter{tocdepth}{2}
\end{abstract}

\setcounter{section}{-1}
\section{Introduction}
\label{sec:intro}
The main goal of these lectures is to touch upon the following questions regarding six-functor formalisms:
\begin{enumerate}
\item \textit{Why care about them?}
\item \textit{What are they?}
\item \textit{How to construct them?}
\end{enumerate}
Needless to say, our answers are far from complete.
(We will try to give references for the reader who wants to venture further but they will certainly not be exhaustive either.)
In short, they are:
\begin{enumerate}
\item \emph{Why?} If one cares about cohomology then one should care about the six operations because the latter \textbf{enhance} the former.
Grothendieck's relative point of view is baked into it which connects well with modern algebraic geometry.
And, finally, the formalism has proven highly successful in the last decades.
(This is particularly apparent in motivic homotopy theory, the other main topic of the summer school.
Unfortunately, we will treat this last point only briefly and leave much to the other talks.)
\item
\emph{What?}
First the confession: There will be no definition of six-functor formalisms in these lectures.
Just as `cohomology' is arguably not a precisely defined term and varies from context to context, we cannot expect its enhancement to admit a definition pleasing everyone.
Instead we will try to give a glimpse of the six functors in action, and we will describe a convenient and precise framework to think about them.
This framework consists of \textbf{coefficient systems} which encode a minimal set of structure and axioms one would like a six-functor formalism to enjoy.
\item
\emph{How?}
Given the power of the formalism it is unsurprising that all known examples required major efforts, often by many mathematicians, until they were established.
(And in several areas, a `complete' formalism is still very much work in progress.)
We will focus on arguably the most common and serious stumbling block, the construction of the \textbf{exceptional functoriality}.
In a slightly different direction, such difficulties can be circumvented altogether by constructing six-functor formalisms out of already-established ones.
And finally, we will discuss by way of illustration, an example (from rigid-analytic geometry) of a recent new addition to the list of six-functor formalisms.
\end{enumerate}

We assume that the reader is familiar with basic scheme theory and has seen derived categories before.
\Cref{sec:why} is written in the language of triangulated categories although the axioms are barely used.
In \Cref{sec:what,sec:how} we use the language of stable \icats but some help is provided and much of it can also be understood just at the level of underlying triangulated categories.
We imagine that the more exposition to the various cohomology theories for schemes (or for other geometric objects) the reader has had the easier it will be to follow the text.

\paragraph*{Acknowledgements}
I would like to thank the organizers of the summer school and conference \textit{The Six-Functor Formalism and Motivic Homotopy Theory} for the opportunity to talk about this subject and for their hard work in making the event a success.
I would also like to thank all the participants of the summer school for their input during and after the lectures.
Bastiaan Cnossen read a previous version of these notes and contributed many corrections and suggestions that greatly improved the document.

\tableofcontents

\section{Why?}
\label{sec:why}

Here we will try to motivate the study and development of six-functor formalisms.
The point of view we will try to convey is that
\begin{quote}
\textit{six-functor formalisms enhance cohomology.}
\end{quote}
Interspersedly, we will also make comments about the related question why six-functor formalisms arose historically in the first place although this is not our focus.
There is little rigorous mathematics to be found here - for that we ask the reader to wait until \Cref{sec:what,sec:how}.

\begin{rmk}

Another natural way to answer the question in the title would be to list applications of the theory and thereby argue for its importance.
We will not do that here and, in any case, compiling an even approximately complete list would seem a daunting task.
Indeed, the language and theory of six-functor formalisms permeates much of modern algebraic geometry and beyond, and has spawned entire fields of research.
The development of, for example, \'etale cohomology, perverse sheaves, or motivic homotopy theory is quite unthinkable in the absence of the six operations.

\end{rmk}
\subsection{A hierarchy of invariants}
\label{sec:hierarchy-invariants}
\begin{exa}
\label{exa:betti-numbers}
If you are studying a topological space $X$, a useful invariant to know about is the sequence of Betti numbers $b_n(X)$, the latter measuring the number of $n$-dimensional holes in~$X$.
Famously, Noether explained how these numbers are just shadows of the homology of~$X$, these being a sequence of abelian groups~$\Hm_n(X)$ measuring the difference between cycles and boundaries on~$X$.
Thus homology is a richer invariant than the Betti numbers since there is a way to go from the former to the latter but no way (in general) to reverse this process:
\[
\begin{tikzcd}
\Hm_n(X)
\ar[d, squiggly, shift right=1em, "\mathrm{rank}\ " left]
\\
b_n(X)
\ar[u, "/"{anchor=center,sloped}, squiggly, shift right=1em]
\end{tikzcd}
\]
\end{exa}

\begin{exa}
\label{exa:weil-conjectures}
Now imagine instead a variety~$X$ over a finite field~$k=\FFq$ (of cardinality~$q=p^r$, say).
If you are an arithmetic geometer, chances are you would like to know the number of rational points, that is, solutions to the polynomial equations defining~$X$, possibly over finite extensions of~$k$.
The $\zeta$-function of~$X$ conveniently packages this information:
\[
\zeta_X(T)=\exp
\left(
\sum_{n\geq 1}\frac{\#X(\FF_{\!q^n})}{n}T^n
\right)\in\Q[\![T]\!]
\]
Moreover, if $X$ is smooth and proper, Weil~\cite{MR29393} predicted that this function is very nicely behaved: It should be a rational function, satisfy a certain functional equation, and one should have tight control over the zeroes and poles.
(Weil also proved this for curves.)
He suggested that these desirable properties would follow from a well-behaved cohomology theory for varieties over finite fields, a suggestion which was eventually realized by the concerted effort of many mathematicians, including Grothendieck, Serre and Deligne:
The theory of $\ell$-adic cohomology was at least partly developed to settle the Weil conjectures.

Here then we find a similar situation as in topology in that cohomology groups are richer invariants than individual numbers and that a certain behaviour of the former implies a certain behaviour of the latter:
\[
\begin{tikzcd}
\Hm^{\bullet}(X_{\bar{k}};\Q_\ell)
\ar[d, squiggly, "\textup{traces of (iterated) Frobenii}\ " left]
\\
\zeta_X(T)
\end{tikzcd}
\]
What is of interest to us in this historical example is that the `good behaviour' of these cohomology groups $\Hm^n(X_{\bar{k}};\Q_\ell)$ was in turn deduced from properties of an even richer invariant, the $\ell$-adic constructible derived category:
\[
\begin{tikzcd}
\Dbc(X_{\bar{k}};\Q_\ell)
\ar[d, squiggly, "\hom\textup{-groups}\ " left]
&&
\textup{category-level invariant}
\\
\Hm^{\bullet}(X_{\bar{k}};\Q_\ell)
\ar[d, squiggly, "\textup{trace of Frobenius}\ " left]
&&
\textup{set-level invariant}
\\
\zeta_X(T)
&&
\textup{element-level invariant}
\end{tikzcd}
\]
Summarizing, in order to prove certain things about element-level invariants mathematicians in this case have found themselves proving things about category-level invariants two levels up and deducing the former from the latter.
\end{exa}

\begin{rmk}
\label{rmk:slogan-1}
Jumping ahead of ourselves for a moment, we can say that
\begin{quote}
\emph{six-functor formalisms govern the behaviour of certain category-level invariants.}
\end{quote}
Therefore we can expect that this formalism will be useful in proving things about certain set-level invariants, namely the cohomology of `spaces' (these could be topological spaces or spaces appearing in algebraic geometry and even further beyond).
To say something more precise we have to appreciate one important feature which arises in set-level and category-level invariants but is absent at the lower end of the hierarchy.
This we will try to do in \Cref{sec:relative-point-view}.
\end{rmk}

\begin{rmk}
The Weil conjectures (\Cref{exa:weil-conjectures}) are discussed from this point of view in an unpublished note of Voevodsky~\cite{voevodsky-state}.
In the same note (from the year~2000) he expresses the view that the development of the six-functor formalism would become one of the main technical tools in advancing motivic homotopy theory:
a view which over the last 20 years has certainly materialized!
\end{rmk}
\subsection{Relative point of view}
\label{sec:relative-point-view}

Grothendieck famously stressed the `relative point of view', replacing for example schemes by morphisms of schemes as the fundamental object of study.
This shift is also apparent in the development of $\ell$-adic cohomology and the proof of the Weil conjectures (\Cref{exa:weil-conjectures}).

\begin{rmk}
Even if one is ultimately interested in the cohomology of a single variety~$X$ it is often necessary to invoke other, related varieties and their cohomologies in the process, for example in arguments that proceed by induction on the dimension, or when covering~$X$ by simpler pieces.
It then becomes important to study the cohomology groups not in isolation but together with the maps
\begin{equation}
f^*:\Hm^{\bullet}(Y)\to \Hm^{\bullet}(Y')
\label{eq:action-f-cohomology}
\end{equation}
for all morphisms $f:Y'\to Y$.

And even if not passing through other varieties, the action of a morphism on cohomology provides additional, often very interesting information about the varieties involved.
In the discussion of the Weil conjectures (\Cref{exa:weil-conjectures}) we already saw an example of this phenomenon:
The action of the Frobenius endomorphism is used to express the number of rational points in terms of cohomology.
\end{rmk}

\begin{rmk}
\label{rmk:cohomology-compact-support}
For the same reason, even if one is interested in proper varieties it is sometimes necessary to invoke non-proper varieties and their cohomologies in the process.
The latter are typically less well-behaved and to make up for that, the notion of \emph{cohomology with compact support} was developed.
Thus in addition to cohomology groups we also want to study the groups $\Hmc^\bullet(X)$ and their dependence on~$X$.
\end{rmk}

\subsection{The six functors in topology}
\label{sec:six-funct-topology}

In \Cref{sec:hierarchy-invariants} we saw that moving up along the hierarchy of invariants, cohomology is replaced by sheaves, and in \Cref{sec:relative-point-view} we stressed the need to adopt a relative point of view.
Putting the two together one arrives at the study of assignments
\[
C:\text{spaces}\longrightarrow \text{categories}
\]
which send a space to some category of sheaves on that space, and where morphisms of spaces induce functors between the corresponding categories of `sheaves'.
(As we saw in \Cref{exa:weil-conjectures}, these are not necessarily literally sheaves but something related, such as derived categories of sheaves.)
The latter are examples of the functors giving `six-functor formalisms' their name.
Sometimes they are also called \emph{operations} since they operate on sheaves.

For Weil and Grothendieck, a good cohomology theory for varieties over finite fields was to behave similarly to the cohomology of topological spaces.
It is therefore prudent to look at the topological situation first.
This we do briefly here.
References that include much more detail include~\cite{MR842190,MR2050072,kashiwara-schapira-90}.

\begin{exa}
\label{exa:inverse-direct-image-topology}
Let us go back to the topological example~\ref{exa:betti-numbers}.
For a nice enough\footnote{for example, cohomologically locally connected in the sense of~\cite{petersen:sheaf-cohomology}} space~$X$ the cohomology $\Hm^{\bullet}(X)$ coincides with the sheaf cohomology of the constant sheaf on~$X$.
The most familiar operations on (abelian) sheaves associated with a continuous map $f:X\to Y$ are the \emph{inverse image} (or \emph{pull-back}) and \emph{direct image} (or \emph{push-forward}), respectively:
\[
\begin{tikzcd}
\Sh(Y)
\ar[r, shift left=.5em, "f^*"]
&
\Sh(X)
\ar[l, shift left=.5em, "f_*"]
\end{tikzcd}
\]
Recall that $f_*\cF$ is the sheaf whose sections on an open subset $U\subseteq Y$ are given by $\Gamma(f\inv(U),\cF)$.
The functor $f^*$ can be defined either as the left adjoint to $f_*$ or via a more or less explicit construction.

Note that if $Y=\pt$ is just a point, the functor $f_*:\Sh(X)\to \Sh(\pt)\simeq\Mod{\Z}$ coincides with the global sections functor.
We deduce that the right derived functor coincides with sheaf cohomology:
\begin{equation}
\dR^nf_*(\cF)\cong \Hm^n(X;\cF)\label{eq:push-forward-cohomology}
\end{equation}
Thus we may view the derived push-forward as a relative and enhanced version of cohomology.
\end{exa}

\begin{exa}
Continuing with \Cref{exa:inverse-direct-image-topology}, another familiar operation is the \emph{direct image with compact support} (or \emph{compactly supported push-forward}).
It is defined as a subfunctor of the direct image functor:
\[
\Gamma(U,f_!\cF):=\{s\in\Gamma(U,f_*\cF)=\Gamma(f\inv(U),\cF)\mid s\text{ has compact support}\}
\]
Note that again for $Y=\pt$ a point, the direct image with compact support recovers cohomology with compact support:
\begin{equation}
\dR^nf_!(\cF)\cong \Hmc^n(X;\cF)
\label{eq:push-forward-!-cohomology}
\end{equation}

\end{exa}

\begin{rmk}
The functor $f_!:\Sh(X)\to \Sh(Y)$ does not admit an adjoint in general.
This together with the fact that we are ultimately interested in derived functors leads us to consider derived categories of sheaves instead.
It turns out that at least for locally compact Hausdorff spaces, the functor acquires a right adjoint:
\[
\begin{tikzcd}
\D(\Sh(X))
\ar[r, shift left=.5em, "\dR f_!"]
&
\D(\Sh(Y))
\ar[l, shift left=.5em, "f^!"]
\end{tikzcd}
\]
The functor $f^!$ is called \emph{exceptional inverse image} (or \emph{pull-back}).
Accordingly, $f_!$ is sometimes also called \emph{exceptional direct image} (or \emph{push-forward}).\footnote{Another common name is \emph{$f$-upper-shriek} for $f^!$ and \emph{$f$-lower-shriek} for $f_!$.}
\end{rmk}

\begin{rmk}
\label{rmk:6-functors-topology}
Together with the tensor product and internal hom of sheaves we have collected all six functors:
\[
\begin{tikzcd}
(\otimes^{\dL},\dR\ihom)
\\[-15pt]
\D(\Sh(X))
\ar[d, "\dR f_*", bend right=50]
\ar[d, "\dR f_!", bend left=50, swap]
\\[30pt]
\D(\Sh(Y))
\ar[u, "\dL f^*", bend left=60]
\ar[u, "f^!", bend right=60, swap]
\end{tikzcd}
\]
It is customary to drop the symbols $\dR$ and $\dL$ for derived functors as the context usually makes clear when derived functors are intended.
\end{rmk}
\subsection{Enhancing cohomology: structure}
\label{sec:enhancing-cohomology-structure}

\begin{notn}
\label{rmk:6ff-abstract-setup}
Let us now abstract from the specific topological situation and instead assume that with each `space' (topological space, scheme, stack, \ldots{}) $X$ we are given a closed tensor triangulated category $(C(X),\otimes,\ihom)$ and with each morphism of spaces $f:X\to Y$ two adjunctions $f^*\dashv f_*$, $f_!\dashv f^!$ of exact functors:
\[
\begin{tikzcd}
(\otimes,\ihom)
\\[-15pt]
C(X)
\arrow[bend right=50]{d}[name=L, label={[label distance=-7pt]0:${\scriptsize f_*}$}]{}
\ar[bend left=50]{d}[name=R, label=left:$f_!$]{}
\\[30pt]
C(Y)
\arrow[bend left=60]{u}[label={[label distance=-17pt]0:${\scriptsize f^*}$}]{}
\arrow[bend right=60, swap]{u}[label={[label distance=-7pt]0:${\scriptsize f^!}$}]{}
\arrow[shorten <=18pt,shorten >=12pt,Rightarrow,to path={(R) -- node {} (L)}]{}
\end{tikzcd}
\]
The arrow $\Leftarrow$ indicates a natural transformation $f_!\Rightarrow f_*$ (which, in topology, is induced by the inclusion of sections with compact support).
We also assume that $f^*$ is endowed with a symmetric monoidal structure.
As a first approximation, the category $C(X)$ may be thought of as a `derived category of sheaves on~$X$' although we don't want to assume that this is literally the case.
To have more neutral language we will refer to objects in $C(X)$ as \emph{coefficients}, just as one speaks of cohomology with coefficients.

While we won't stress this aspect, it is important that the dependence on $X$ and $f$ is `pseudo-functorial'.
For example, we should in addition be given natural isomorphisms $g^*f^*\cong (fg)^*$ satisfying familiar cocycle conditions, and $\id^*\cong \id$.
For a precise formulation we refer to~\cite[Definition~2.2]{voevodsky:cross-functors}.
\end{notn}

\begin{rmk}
  We will now discuss how this basic setup allows us to recover structure present in cohomology.
In \Cref{sec:enhancing-cohomology-properties} we will see some properties of the six functors and how these properties govern the behaviour of cohomology.
\end{rmk}

\begin{rmk}
\label{rmk:cohomology-abstract}
The identifications in~(\ref{eq:push-forward-cohomology}) and~(\ref{eq:push-forward-!-cohomology}) show how the sheaf operations allow us to recover cohomology of spaces.
In the basic setup of \Cref{rmk:6ff-abstract-setup} we may take this as our \emph{definition}.
Let $p:X\to B$ be a morphism of spaces, where we think of $B$ as a `base space', fixed by the context.
For any coefficient $\cF\in C(X)$, the \emph{cohomology} \resp{\emph{with compact support}} is
\[
\Hm^\bullet(X;\cF):= p_*\cF\in C(B) \qquad (\textup{resp }\Hmc^\bullet(X;\cF):= p_!\cF\in C(B)).
\]
When $\cF=\unit$ is the tensor unit we denote these coefficients simply by $\Hm^\bullet(X)$ and $\Hmc^\bullet(X)$, respectively.

In order to obtain actual cohomology groups one may take appropriate homomorphism groups:
\[
\Hm^n(X;\cF):= \hom_{C(B)}(\unit,p_*\cF[n]) \qquad (\textup{resp }\Hmc^n(X;\cF):= \hom_{C(B)}(\unit,p_!\cF[n]))
\]
\end{rmk}

\begin{rmk}
\label{rmk:homology-abstract}
One can also define \emph{homology} and \emph{Borel-Moore homology}, generalizing these theories from topology, like so:
\begin{center}
\begin{tabular}{ccrcl}
  cohomology && $p_*p^*\unit$&&$\Hm^\bullet$\\
  cohomology with compact support && $p_!p^*\unit$&&$\Hmc^\bullet$\\
  homology && $p_!p^!\unit$&&$\Hm_\bullet$\\
  Borel-Moore homology && $p_*p^!\unit$&&$\Hm_\bullet^{\textup{BM}}$
\end{tabular}
\end{center}
\end{rmk}

\begin{exa}
\label{exa:6ffs}
Let $k$ be a field in which the prime $\ell$ is invertible and such that $\mathrm{cd}_\ell(k)<\infty$.
Then one has a structure as in \Cref{rmk:6ff-abstract-setup} which sends each finite type $k$-scheme (or even algebraic stack)~$X$ to the $\ell$-adic constructible derived category $\Dbc(X;\Q_\ell)$ (see e.g.~\cite{MR2434693}, although much of it goes back to SGA, particularly~\cite{MR0354654,deligne:sga4.5}).
In this case the cohomology \resp{with compact support} as defined in \Cref{rmk:cohomology-abstract} recovers $\ell$-adic cohomomology \resp{with compact support}.

Here are some more examples:\footnote{Some of these are only partial examples, in others certain technical assumptions are required.} 
\begin{center}
\begin{tabular}[c]{c|c}
  Coefficients&cohomology groups\\\hline
  $\Dbc(X;\Q_\ell)$ \emph{constructible $\ell$-adic sheaves}&
                                                              \emph{$\ell$-adic cohomology}\\
  $\Dbc(X(\CC);\Z)$ \emph{constructible analytic sheaves}&
                                                           \emph{Betti cohomology}\\
  $\Dbh(\cD_X)$ \emph{holonomic }$\mathcal{D}$\emph{-modules}&
                                                               \emph{de\,Rham cohomology}\\
  $\Db(\mathrm{Coh}(X))$ \emph{coherent sheaves}&
                                                  \emph{coherent cohomology}\\
  $\Db(\MHM(X))$ \emph{mixed Hodge modules}&
                                             \emph{absolute Hodge cohomology}\\
  $\DM(X)$ \emph{Voevodsky motivic sheaves}&
                                             \emph{(weight-$0$) motivic cohomology}\\
  $\SH(X)$ \emph{stable motivic homotopy sheaves}&
                                                   \emph{stable motivic (weight-$0$) cohomotopy groups}
\end{tabular}
\end{center}
\end{exa}

\begin{rmk}
Consider now a relative situation
\[
\begin{tikzcd}
X
\ar[rr, "f"]
\ar[rd, "p", swap]
&&
Y
\ar[ld, "q"]
\\
&
\pt
\end{tikzcd}
\]
The unit of the adjunction $ f^*\dashv f_*$ induces a morphism
\[
\eta:q_*\to q_*f_*f^*\cong p_*f^*
\]
and thus a morphism in cohomology
\[
\Hm^\bullet(Y,\cF)\to\Hm^\bullet(X,f^*\cF).
\]
If $\cF=\unit_X$ one recovers the action of $f$ on the cohomology of~$X$ as in~(\ref{eq:action-f-cohomology}).
\end{rmk}

\begin{rmk}
With compactly supported cohomology the situation is more subtle.
In the topological context, a natural map
\[
\Gamma_{\textup{c}}(Y,F)\to\Gamma_{\textup{c}}(X,f^*F)
\]
is defined when $f:X\to Y$ is proper.
Namely, in that case pulling back sections restricts to those with compact support.
This map is in turn induced by the same unit of the adjunction,
\[
\eta:q_!\to q_!f_*f^*\cong p_!f^*,
\]
using that $f_!=f_*$ as $f$ is proper:
\[
\Hmc^n(Y,\cF)\to \Hmc^n(X,f^*\cF)
\]
Similar functoriality exists for proper morphisms of schemes and other `spaces'.
\end{rmk}

\begin{exc}
Describe the functoriality of homology.
\end{exc}

\begin{rmk}
Let $p:X\to B$ be a morphism and recall that the inverse image $p^*:C(B)\to C(X)$ is symmetric monoidal.
It follows formally that its right adjoint $p_*:C(X)\to C(B)$ sends commutative algebras to commutative algebras.
In particular, the cohomology $\Hm^\bullet(X)=p_*\unit\in C(B)$ has the structure of a commutative algebra.
We may view this as an enhancement of the \emph{cup product} in cohomology.

Indeed, evaluating the multiplication through appropriate hom-groups $\hom_{C(B)}(\unit,-[n])$ one obtains a cup product at the level of cohomology groups:
\[
\cup:\Hm^a(X)\times\Hm^b(X)\to \Hm^{a+b}(X)
\]
\end{rmk}

\begin{rmk}
\label{rmk:regulator}
Needless to say, this section does not exhaust all structures of interest in cohomology.
For example, \emph{vanishing} and \emph{nearby cycles} are additional concepts of interest.
Another example will play a more important role in \Cref{sec:what}.
The classical theorem of de\,Rham and its algebraic geometry version of Grothendieck identifies cohomology groups associated with different theories (de\,Rham and singular cohomology).
Relatedly, \emph{Chern classes} and \emph{regulator maps} may be seen as morphisms from certain cohomology groups of one theory to those of another.
We want to think of these as underlying `(iso)morphisms of six-functor formalisms'.
For example, the Beilinson regulator maps algebraic K-theory classes to absolute Hodge cohomology, and this should arise  from a family of \emph{Hodge realization functors}
\[
\rho^*_{\textup{H}}(X):\DMc(X)\to \Db(\MHM(X))
\]
from categories of (constructible) motivic sheaves that `realize' the underlying Hodge cohomology of motives.
Moreover, ideally we would like these functors to be suitably compatible with the six operations.\footnote{This particular example is taken up again in \Cref{exa:hodge-modules}.}

\end{rmk}
\subsection{Enhancing cohomology: properties}
\label{sec:enhancing-cohomology-properties}
We now turn to properties of the six functors and related properties in cohomology.
We will discuss here only some of the many possibilities.
Our selection is geared towards the approach to six-functor formalisms described in \Cref{sec:what}.
\begin{rmk}
To avoid a possible confusion let us stress: The point is not (at least: not always) that results about a given cohomology theory come for free using six-functor formalisms.
But the difficulty can sometimes be shifted from establishing them directly to establishing that the cohomology theory underlies a six-functor formalism.
We will return to this in \Cref{sec:what,sec:how}.
\end{rmk}

\subsubsection{Proper push-forward}
\label{sec:ppf}

We already mentioned in the topological context that $f_!=f_*$ whenever $f$ is proper.
The same is true for six-functor formalisms in general:
Whenever $f$ is `proper' (for example, a proper morphism of schemes), the transformation $f_!\to f_*$ is an isomorphism.

\subsubsection{Duality} 
\label{sec:duality}
An important impetus for developing the six-functor formalism was what is now sometimes called \emph{Grothendieck duality}, as for example in~\cite{hartshorne:duality,MR0130879}.
In a very limited sense, in our setup this can be viewed as the computation of $f^!\unit$ for $f:X\to\pt$ smooth.
\begin{exa}[Topology]
\label{exa:dualizing-object-topology}
Let $X$ be a smooth manifold of dimension~$d$ and let $f:X\to \pt$ be the unique map.
For a ring $\Lambda$, one finds that $f^!\Lambda=\omega_{X,\Lambda}[d]$ is the (shifted) $\Lambda$-orientation sheaf.
Thus $X$ is orientable if and only if $\omega_{X,\Z}$ is the constant sheaf with value~$\Z$.
In that case, $\omega_{X,\Lambda}$ is constant for every ring~$\Lambda$.
\end{exa}

\begin{exa}[Coherent]
\label{exa:dualizing-object-coherent}
Let $X$ be a smooth $k$-variety of dimension~$d$.
If $f:X\to\Spec(k)$ denotes the structure morphism then $f^!k\cong \omega_{X}[d]$ is the (shifted) canonical sheaf on~$X$.
\end{exa}

\begin{exa}[$\ell$-adic]
\label{exa:dualizing-object-etale}
Let $X$ be a smooth $k$-variety of dimension~$d$ and $\ell$ a prime invertible in~$k$.
Then $f^!\Q_\ell\cong \Q_{\ell}(d)[2d]$ where $(d)$ denotes the $d$th Tate twist.
\end{exa}

\begin{cor}
\label{sta:PD-SD}
With the assumptions of \Cref{exa:dualizing-object-topology,exa:dualizing-object-coherent,exa:dualizing-object-etale} respectively, one has
\begin{enumerate}[(a)]
\item Poincar\'e duality (topology): if $X$ is orientable, $\Hm^n_c(X;\Q)^*\cong\Hm^{d-n}(X;\Q)$
\item Poincar\'e duality ($\ell$-adic): $\Hm^n_c(X;\Q_\ell)^*\cong\Hm^{2d-n}(X;\Q(d))$
\item Serre duality: if $X$ is proper, $\Hm^n(X;\cO_X)^*\cong\Hm^{d-n}(X;\omega_X)$
\end{enumerate}
\end{cor}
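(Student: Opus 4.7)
The plan is to derive all three statements uniformly from the $(p_{!},p^{!})$-adjunction applied to the structure map $p\colon X\to B$ (with $B=\pt$ or $B=\Spec(k)$), using the computations of $p^{!}\unit$ recorded in \Cref{exa:dualizing-object-topology,exa:dualizing-object-coherent,exa:dualizing-object-etale}, together with the identity $p_{!}\cong p_{*}$ from \Cref{sec:ppf} in the Serre-duality case.

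The backbone of the computation is the chain of natural identifications
\[
\Hmc^n(X;\unit)^{*}
\;\cong\;
\hom_{C(B)}\!\bigl(p_{!}\unit,\unit[-n]\bigr)
\;\cong\;
\hom_{C(X)}\!\bigl(\unit,p^{!}\unit[-n]\bigr),
\]
where the first isomorphism uses the dualizability of $p_{!}\unit$ in $C(B)$ to pass from $\Hmc^n(X;\unit)=\hom_{C(B)}(\unit,p_{!}\unit[n])$ to its linear dual, and the second is the $(p_{!},p^{!})$-adjunction combined with $p^{*}\unit\simeq\unit$. Substituting the given value of $p^{!}\unit$ turns the right-hand side into an ordinary cohomology group in the sense of \Cref{rmk:cohomology-abstract}. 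In case (a), orientability forces $p^{!}\Q=\omega_{X,\Q}[d]$ to be the constant sheaf $\Q[d]$, giving $\Hm^{d-n}(X;\Q)$. In case (b), \Cref{exa:dualizing-object-etale} gives $p^{!}\Q_\ell=\Q_\ell(d)[2d]$, and the right-hand side becomes $\Hm^{2d-n}(X;\Q_\ell(d))$. In case (c), \Cref{exa:dualizing-object-coherent} gives $p^{!}\cO=\omega_X[d]$; since $X$ is proper we moreover replace $p_{!}$ by $p_{*}$ in the left-hand side, so that $\Hmc^n(X;\cO_X)=\Hm^n(X;\cO_X)$, and the right-hand side becomes $\Hm^{d-n}(X;\omega_X)$.

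The main obstacle is the finiteness input hidden in the first isomorphism above: the identification $\Hmc^n(X;\unit)^{*}\cong\hom_{C(B)}(p_{!}\unit,\unit[-n])$ requires $p_{!}\unit$ to be a dualizable (equivalently, perfect) object of $C(B)$, a property not part of the axiomatic setup of \Cref{rmk:6ff-abstract-setup}. In each context this is a genuine theorem that must be invoked separately: finite-dimensionality and boundedness of compactly supported Betti cohomology of a smooth orientable manifold, of compactly supported $\ell$-adic cohomology of a smooth variety over $k$ (using $\mathrm{cd}_\ell(k)<\infty$), and of coherent cohomology of a proper $k$-variety. Granting this input, the three classical duality statements all reduce to the single adjunction-plus-substitution argument above.
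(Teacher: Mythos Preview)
Your argument is correct and matches the paper's proof essentially verbatim: the paper writes the same adjunction chain
\[
\Hm_c^n(X)^{*}\cong\hom(f_!f^{*}\unit[n],\unit)\cong\hom(\unit,f_{*}f^{!}\unit[-n])
\]
and then substitutes the computed values of $f^{!}\unit$. The only difference is that you make explicit the finiteness/dualizability input underlying the first isomorphism, which the paper leaves implicit.
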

\begin{proof}
This follows from the adjunction isomorphisms
\[
\Hm_c^n(X)^*\cong\hom(f_!f^*\unit[n],\unit)\cong\hom(\unit,f_*f^!\unit[-n])
\]
together with the computations reported in \Cref{exa:dualizing-object-topology,exa:dualizing-object-coherent,exa:dualizing-object-etale}.
\end{proof}

\begin{rmk}
The coefficient $f^!\unit$ tries to be a dualizing object.
Verdier duality is concerned with the functors $\DD=\ihom(-,f^!\unit)$ and asks under which conditions one has isomorphisms such as
\[
\id\isoto\DD\circ\DD, \qquad \DD g_!\isoto g_*\DD, \qquad g^*\DD\isoto \DD g^!.
\]
It provides a relative version and generalization of duality phenomena such as the ones of \Cref{sta:PD-SD}.
\end{rmk}

\begin{exc}[Atiyah duality]
\label{exc:atiyah}
Let $f:X\to B$ be a smooth and proper morphism.
Show that the coefficient $\Hm^\bullet(X)=\Hmc^\bullet(X)$ is \emph{rigid}, with $\otimes$-dual given by $\Hm_\bullet(X)=\Hm_\bullet^{\textup{BM}}(X)$.\footnote{We use `rigid' instead of `strongly dualizable'. Recall that an object $a$ in a symmetric monoidal category is rigid if there is an object $a^*$ (called its $\otimes$-\emph{dual}) and morphisms $\unit\to a\otimes a^*$ and $a^*\otimes a\to \unit$ satisfying the identities familiar from adjunctions, see~\cite{dold-puppe:trace}.}

You will want to use the following two fundamental properties:
\begin{enumerate}[(a)]
\item \namedlabel{proj}{(Proper projection formula)}
for arbitrary $f$:
\[
f_!(f^*F\otimes G)\isoto F\otimes f_!G
\]
\item if $f$ is smooth then
\[
f^*F\otimes f^!H\isoto f^!(F\otimes H)
\]
\end{enumerate}
Note that the two morphisms are related by adjunction.
The second one is a form of relative purity, to which we now turn.
\end{exc}

\subsubsection{Relative purity}
\begin{rmk}
\label{rmk:relative-purity}  
Previously we `computed' $f^!\unit$ in the case where $f:X\to \pt$ is smooth.
It is natural to want to generalize this to arbitrary \emph{smooth} morphisms $f:X\to Y$,\footnote{In the topological context, this should be interpreted as a topological submersion~\cite[Definition~3.3.1]{kashiwara-schapira-90}.}
and this is provided by \emph{relative purity}.
It implies
\begin{enumerate}[(1)]
\item that the difference between $f^!$ and $f^*$ is measured yet
  again by $f^!\one$:
\[
f^!\one\otimes f^*F\isoto f^!F
\]
(This is equivalent to the second hint in \Cref{exc:atiyah}.)
\item that the coefficient $f^!\one$ is $\otimes$-invertible.
\end{enumerate}
Moreover, the coefficient $f^!\one$ arises from the \emph{Thom construction} (see below) applied to the relative tangent bundle~$T_f$, and this information is very useful in computations.
We interpret the equivalence $f^!\one\otimes(-)=:\twistns{T_f}$ as a `twist' by the relative tangent bundle and may therefore rewrite:
\begin{equation}\label{eq:relative-purity-twists}
\twistns{T_f}f^*\simeq f^!
\end{equation}
\end{rmk}

\begin{exa}
\label{exa:thom-ladic}
In the $\ell$-adic setting the Thom construction depends only on the rank of the vector bundle and the relative purity isomorphism reads as (see \Cref{exa:tate} below)
\begin{equation}
  \label{eq:relative-purity-ladic}
  f^*(d)[2d]\simeq f^!.\footnote{This is more generally true for \emph{orientable} theories (that is, those with a good notion of Chern classes).
  Implicitly, we also used the canonical isomorphism $f^*\twist{d}\simeq\twistns{d}f^*$ which, in the notation introduced in \Cref{rmk:thom-construction}, is a particular instance of $f^*\twist{V}\simeq\twistns{f^{-1}V}f^*$ for any vector bundle $V$ on~$Y$.}
\end{equation}
Note how~(\ref{eq:relative-purity-ladic}) generalizes Poincar\'e duality in $\ell$-adic cohomology discussed in \Cref{sec:duality}.

One often abbreviates the operation $(d)[2d]$ by $\twistns{d}$ and this is our inspiration for the notation in~(\ref{eq:relative-purity-twists}).
\end{exa}

Before discussing the Thom construction, let us note some important consequences of relative purity.

\begin{rmk}
As a consequence we note that for $f$ smooth, the inverse image functor $f^*$ admits a left adjoint
\[
f_\sharp=f_!\twist{T_f}\quad\dashv\quad f^*.
\]
It satisfies the \namedlabel{smooth-proj}{(Smooth projection formula)}:
\[
f_\sharp(f^*\cF\otimes\cG)\isoto \cF\otimes f_\sharp\cG,
\]
which arises, by adjunction, from the composite
\[
f^*\cF\otimes\cG\to f^*\cF\otimes f^*f_\sharp\cG\isoto f^*(\cF\otimes f_\sharp\cG)
\]
of the unit of the adjunction $f_\sharp\dashv f^*$ and the monoidality of~$f^*$.
\end{rmk}

\begin{exa}
\label{exa:motive-smooth}
Note that for $f:X\to B$ smooth, the homology of $X$ may be expressed alternatively as
\[
\Hm_\bullet(X)=f_!f^!\unit\simeq f_\sharp f^*\unit.
\]
\end{exa}

\begin{rmk}
\label{rmk:thom-construction}
In terms of this left adjoint we can describe the Thom construction as follows.
Let $p:V\to X$ be a vector bundle with zero section $s:X\into V$.
The $V$-twist is defined as
\[
\twistns{V}:= p_\sharp s_*: C(X)\to C(X).
\]
Then the \emph{Thom construction} applied to~$V$ is defined as the evaluation of this functor at the unit:
\[
\thom(V)=\unit\twist{V}=p_\sharp s_*\unit
\]
\end{rmk}

\begin{exa}
\label{exa:etale-purity}
As mentioned in \Cref{rmk:relative-purity}, for $f$ smooth we have
\[
  f^!\one\simeq\thom(T_f).
\]
If $f$ is \'etale then the relative tangent bundle $T_f=X$ is of rank zero, $\thom(T_f)\simeq\one$, and one deduces that $f^!\simeq f^*$.
\end{exa}

\begin{exc}
Explain as a consequence that Borel-Moore homology is contravariantly functorial with respect to \'etale morphisms.
\end{exc}

\begin{rmk}
\label{rmk:thom-ktheory}
  The Thom construction yields a morphism from the monoid (with respect to direct sums) of isomorphism classes of vector bundles on~$X$ to the Picard group of~$C(X)$,
which can be extended to virtual vector bundles~\cite{MR902592,MR2651359}, perfect complexes and passes to the level of $K$-theory.
In particular, there is a group homomorphism
\[
K_0(X)\xto{\thom(-)}\mathrm{Pic}(C(X)).
\]

For a modern approach to this construction see for example~\cite[\S\,16]{bachmann-hoyois:norms}, and for a more general discussion of purity we refer to~\cite{MR3874952,Deglise-Jin-Khan:fund-classes}.
\end{rmk}

\begin{rmk}
Continue with the set-up of \Cref{rmk:thom-construction} and denote by $j:V\backslash X\into V$ the inclusion of the complement of the zero section.
  From Localization discussed just below in \Cref{sec:loc} we deduce an exact triangle
  \begin{equation}
    \label{eq:thom-triangle}
p_\sharp j_\sharp j^*p^*\unit\to p_\sharp p^*\unit\to p_\sharp s_*s^*p^*\unit
\end{equation}
in $C(X)$ which exhibits $\thom(V)$ as the cone of the canonical morphism
\[
\Hm_\bullet(V\backslash X)\to \Hm_\bullet(V).
\]
These are the analogues of the sphere and disk bundle associated with~$V$ in topology, respectively, and this justifies labeling the construction `Thom construction'.
\end{rmk}

Locally every vector bundle is trivial so we better understand these first.
By \Cref{rmk:thom-ktheory}, it is enough to understand the rank-$1$ case.
\begin{exa}
\label{exa:tate}
Let $V=\aone_X$ be the trivial vector bundle of rank~$1$ on $X$.
A fundamental property of six-functor formalisms in algebraic geometry is the contractibility of the affine line:
\begin{description}
\item[\namedlabel{hty}{($\aone$-homotopy)}] $p_\sharp p^*\isoto\id$
\end{description}
which implies that $\Hm_\bullet(\aone_X)=\Hm_\bullet(X)$.
By \Cref{exc:tate} below, the exact triangle~(\ref{eq:thom-triangle}) in $C(X)$ becomes
\[
\unit\oplus\unit\twist{1}\kern-3pt[-1]\to\unit\to\thom(\aone_X)
\]
so that $\thom(\aone_X)=\unit\twist{1}$.
\end{exa}

\begin{exc}
\label{exc:tate}
Show that the homology of $\mathbb{G}_m=\aone\backslash 0$ splits canonically:
\[
\Hm_\bullet(\mathbb{G}_m)=\unit\oplus\tilde{\Hm}_\bullet(\mathbb{G}_m)
\]
for some coefficient $\tilde{\Hm}_\bullet(\mathbb{G}_m)$ (which we think of as the \emph{reduced} homology of $\mathbb{G}_m$).
We define the \emph{Tate twists} (and shifts thereof)
\begin{align*}
\unit(1):=\tilde{\Hm}_\bullet(\mathbb{G}_m)[-1], && \unit\twist{1}:=\tilde{\Hm}_\bullet(\mathbb{G}_m)[1].
\end{align*}
Using that open covers give rise to Mayer-Vietoris exact triangles,\footnote{This is a particular instance of \Cref{exc:nisnevich-triangle} below.} show also that
\[
\Hm_\bullet(\pone)=\unit\oplus\unit\twist{1}.
\]
\end{exc}

\begin{rmk}
We deduce from the preceding discussion that 
\begin{description}
\item[\namedlabel{tate}{(Tate stability)}] $\unit\twist{1}=\thom(\aone)$ is $\otimes$-invertible.
\end{description}
We may then define, for any coefficient $F$ and $n\in\Z$, $F\twist{n}=F\otimes\unit\twist{n}$.
\end{rmk}

\begin{rmk}
  As soon as we have discussed smooth base change (\Cref{rmk:sbc}) we could establish that the Thom construction is Zariski local.
  Together with the localization property we are about to discuss (\Cref{rmk:localization}), \bref{tate} therefore is seen to imply the $\otimes$-invertibility of all Thom coefficients.
In the same vein, \bref{hty} is enough to imply the contractibility of any vector bundle.
\end{rmk}
\subsubsection{Localization}
\label{sec:loc}
Let $i:Z\into X$ be a closed immersion with open complement $j:X\backslash Z\into X$.
\begin{notn}
One defines the \emph{cohomology of $X$ with support in $Z$} to be
\[
\Hm_Z^\bullet(X):=i_!i^!\unit\in C(X)
\]
\end{notn}
(In other words, it is the homology of~$Z$ relative to~$X$.)
\begin{exa}
Applying $\hom_{C(X)}(\unit,-[n])$ this recovers the corresponding notion in topology and in $\ell$-adic cohomology.
In the coherent context, these groups may be better known under the name of \emph{local cohomology (with respect to~$Z$)}.
\end{exa}

\begin{rmk}
There is a so-called \emph{localization triangle} of functors $C(X)\to C(X)$:
\[
i_!i^!\to \id\to j_*j^*
\]
which we may apply to the tensor unit~$\one$ to obtain
\[
\Hm^\bullet_Z(X)\to \Hm^\bullet(X)\to\Hm^\bullet(X\backslash Z).
\]
The associated long exact sequence is a well-known cohomological tool:
\[
\cdots\to \Hm^n_Z(X)\to\Hm^n(X)\to\Hm^n(X\backslash Z)\to\Hm^{n+1}_Z(X)\to\cdots
\]
\end{rmk}

\begin{rmk}
\label{rmk:localization}
The localization triangle is in turn a consequence of the localization property of six-functor formalisms:
\begin{description}
\item[\namedlabel{loc}{(Localization)}] The sequence of triangulated categories
\[
C(Z)\xto{i_*}C(X)\xto{j^*}C(U)
\]
is a localization sequence.
\end{description}
This means that one is in the situation of a recollement~\cite[\S\,1.4]{MR751966}, and in particular that
\begin{enumerate}[(1)]
\item $i_*\simeq i_!$, $j_*$, $j_!$ are fully faithful,
\item the composites $j^*i_*$, $i^!j_*$ and $i^*j_!$ all vanish,
\item the pairs $(i^*,j^*)$ and $(i^!,j^!)$ are each conservative,
\item one has another localization sequence $j_!j^!\to \id\to i_*i^*\to^+$.
\end{enumerate}
\end{rmk}

\begin{rmk}
The last triangle may also be written as (with base space~$X$)
\[
\Hmc^\bullet(X\backslash Z)\to \Hmc^\bullet(X)\to\Hmc^\bullet(Z)
\]
and gives rise to the usual long exact sequence of pairs in compactly supported cohomology.
This follows from the identifications $j^*=j^!$ (\Cref{exa:etale-purity}) and $i_*=i_!$ (\Cref{sec:ppf}).
\end{rmk}

\begin{exa}
\label{exa:empty-reduced}
\begin{enumerate}
\item
  \label{it:empty}
  It follows from \bref{loc} that $C(\emptyset)\simeq 0$ (take $i=\id:\emptyset\to\emptyset$).
\item 
  Now let $X_{\textup{red}}$ be~$X$ with the reduced scheme structure, and $i:X_{\textup{red}}\into X$ the
  obvious closed immersion.
  It follows from \bref{loc} together with~\ref{it:empty} that $i_*:C(X_{\textup{red}})\isoto C(X)$ is an equivalence.
  In other words, six-functor formalisms are insensitive to nilpotents.
\end{enumerate}
\end{exa}

\begin{rmk}
In the $\ell$-adic setting, localization is an easy property.
In other contexts, however, it can be a substantial theorem.
For example, for $\mathcal{D}$-modules fully faithfulness of $i_*$ is known as \emph{Kashiwara's lemma}.
In motivic homotopy theory, the localization sequence is a fundamental result of Morel-Voevodsky called the \emph{Glueing theorem}.
\end{rmk}

\subsubsection{Blow-up}
\label{sec:blowup}
The relation between the cohomology of a variety $X$ and its blow-up $\tilde{X}=\mathrm{Bl}_Z(X)$ is as simple as one might hope but it encodes a fundamental property of six-functor formalisms, namely proper base change.
\begin{notn}
We place ourselves in a more generally situation, with a commutative diagram of the following shape:
\begin{equation}
\label{eq:abstract-blow-up}
\begin{tikzcd}
\tilde{Z}
\ar[r, "\tilde{i}"]
\ar[rd, "w"]
\ar[d, "p'"]
&
\tilde{X}
\ar[d, "p"]
&
\tilde{X}\backslash\tilde{Z}
\ar[l, "\tilde{j}"]
\ar[d, "="]
\\
Z
\ar[r, "i"]
&
X
&
X\backslash Z
\ar[l, "j"]
\end{tikzcd}
\end{equation}
We assume that $p$ is proper and $i$ a closed immersion, both squares are Cartesian and the right vertical arrow is an isomorphism.
In this situation, the left part of the diagram is called an \emph{abstract blow-up square}.
\end{notn}

We now want to explain why there is an exact triangle (the \emph{blow-up exact triangle})
\begin{equation}
\label{eq:blow-up-triangle}
\Hm^\bullet(X)\to\Hm^\bullet(Z)\oplus\Hm^\bullet(\tilde{X})\to\Hm^\bullet(\tilde{Z}).
\end{equation}
\begin{proof}["Proof"]
The functoriality of cohomology easily gives the two morphisms in~(\ref{eq:blow-up-triangle}) so that the composite is zero (this involves introducing a sign, as usual).
We will cheat a little bit and assume that cones are functorial so we get a canonical morphism from the cone of the first map to $\Hm^\bullet(\tilde{Z})$ and it suffices to show this map is invertible.
By \bref{loc}, this in turn can be checked after applying each of $i^*$ and $j^*$.
The upshot of this little game is that we may prove $i^*$(\ref{eq:blow-up-triangle}) and $j^*$(\ref{eq:blow-up-triangle}) are exact triangles.

Let us write the two candidate triangles in terms of the six operations:
\begin{align*}
  i^*\unit\to i^*i_*i^*\unit\oplus i^*p_*p^*\unit\to i^*w_*w^*\unit\\
  j^*\unit\to j^*i_*i^*\unit\oplus j^*p_*p^*\unit\to j^*w_*w^*\unit
\end{align*}
By \bref{loc}, $i_*$ is fully faithful, $j^*i_*=0$, and $j^*w_*=j^*i_*p'_*=0$.
Taking this into account the candidate triangles look as follows:
\begin{align*}
  \unit\to \unit\oplus i^*p_*\unit\to p'_*\tilde{i}^*\unit\\
  \unit\to  j^*p_*\unit\to 0
\end{align*}
It is clear that what remains to do is to `commute $p_*$ with $i^*$ and $j^*$', respectively.
This is precisely the content of proper base change (\Cref{rmk:pbc}).
\end{proof}

\begin{rmk}
\label{rmk:pbc}
Let
\begin{equation}
\label{eq:square}
\begin{tikzcd}
V
\ar[r, "h"]
\ar[d, "k"]
&
X
\ar[d, "f"]
\\
W
\ar[r, "g"]
&
Y
\end{tikzcd}
\end{equation}
be a Cartesian square.
Using the unit and counit of the adjunctions between inverse and direct image functors we deduce a canonical \emph{Beck-Chevalley} (or, \emph{push-pull}) transformation:
\begin{equation}
g^*f_*\to k_*k^*g^*f_*\simeq k_*h^*f^*f_*\to k_*h^*\label{eq:BC}
\end{equation}
\begin{description}
\item[\namedlabel{pbc}{(Proper base change)}] If $f$ is proper then (\ref{eq:BC}) is invertible: $g^*f_*\simeq k_*h^*$.
\end{description}
\end{rmk}

\begin{rmk}
\label{rmk:sbc}
Another instance in which the Beck-Chevalley transformation is invertible is:
\begin{description}
\item[\namedlabel{sbc}{(Smooth base change)}] If $g$ is smooth then (\ref{eq:BC}) is invertible: $g^*f_*\simeq k_*h^*$.
\end{description}
\end{rmk}

\begin{exc}
\label{exc:sbc-alternative}
Recall that by relative purity, the inverse image along a smooth morphism admits a left adjoint.
Construct analogously a transformation
\begin{equation}
\label{eq:sbc}
h_\sharp k^*\to f^*g_\sharp
\end{equation}
and show that it is invertible if and only if~(\ref{eq:BC}) is.
(This is a general phenomenon in 2-category theory: The transformations~(\ref{eq:BC}) and~(\ref{eq:sbc}) are ``mates''.)
\end{exc}

\begin{exc}
\label{exc:nisnevich-triangle}
Consider again the diagram~(\ref{eq:abstract-blow-up}).
Assume now instead that $p$ is \'etale, that $i$ is an open immersion, and that the right vertical arrow is an isomorphism on reduced schemes.
The left part of the diagram in this case is called a \emph{distinguished Nisnevich square}.
Prove in a similar way that there is an associated exact triangle~(\ref{eq:blow-up-triangle}).
(We might call this a \emph{Nisnevich-Mayer-Vietoris triangle}.)
\end{exc}

\begin{exc}
Sometimes, proper and smooth base change instead refer to the following isomorphisms:
\begin{enumerate}[(a)]
\item Proper base change: $g^*f_!\simeq k_!h^*$\label{it:pbc}
\item Smooth base change: $g^!f_*\simeq k_*h^!$\label{it:sbc}
\end{enumerate}
At least morally, these are nothing but reformulations of \bref{pbc} and \bref{sbc}, respectively.
More exactly, using properties discussed previously (for example, localization and relative purity):
\begin{enumerate}[(a)]
\item
Assume $f$ factors as an open immersion followed by a proper morphism (for example, $f$ is separated and of finite type).
Construct a zig-zag of push-pull transformations between $g^*f_!$ and $k_!h^*$.
Show that it is an isomorphism if \bref{pbc} holds.
\item
Assume $g$ factors as a closed immersion followed by a smooth morphism (for example, $g$ is quasi-projective).
Construct a zig-zag of push-pull transformations between $g^!f_*$ and $k_*h^!$.
Show that it is an isomorphism if \bref{sbc} holds.
\end{enumerate}
What can you say about the converse statements?
\end{exc}

\section{What?}
\label{sec:what}

What is a six-functor formalism?
As mentioned in the introduction, we will not try to give a definition.
However, our main goal in this section is to describe an axiomatization of a convenient `stand-in'.
It encodes a minimal set of structure and properties a six-functor formalism is commonly expected to enjoy.
And we show how powerful this notion yet is.
For example, most properties discussed in \Cref{sec:enhancing-cohomology-properties} are consequences, and the few remaining ones (related to duality) can still be studied within this framework. 

The results in this section rely on the work of many mathematicians, see \Cref{rmk:attribution}.
\subsection{A convenient framework}
\label{sec:cosy}
From now on we officially restrict to schemes as our `spaces'.
(But see \Cref{sec:except-funct-rigsh}.)

\begin{notn}
\label{conv:schemes}
Throughout we fix:
\begin{itemize}
\item $\base$: a base scheme, assumed Noetherian and finite-dimensional,
\item $\Sch[\base]{}$: $B$-schemes, assumed separated and of finite-type over~$B$.
\end{itemize}
\end{notn}
If $\base$ is clear from the context or doesn't play a role, we will refer to $\base$-schemes as just schemes and write $\Sch{}$.
Note that all schemes considered are Noetherian and finite-dimensional, and all morphisms are separated and of finite type.
This will come in handy although more general setups are certainly possible.

\begin{rmk}
For our framework to be flexible enough it is better to replace triangulated categories by a suitable enhancement.
We saw a hint of this at a very basic level in the proof of the blow-up triangle~(\ref{eq:blow-up-triangle}).
More serious uses of an enhancement will be made throughout \Cref{sec:what,sec:how}.
We will work with \emph{stable \icats} as developed extensively in~\cite{Lurie_higher-algebra}.
Nevertheless, a reader who is not familiar with this theory may replace them by triangulated categories (or another suitable enhancement) and still get the gist of the text.
Most statements would still make sense and might even be true.
\end{rmk}

\begin{notn}
The \icat of stable \icats and exact functors is denoted by $\iCatst$.
This has a symmetric monoidal structure and we identify commutative algebra objects therein, with symmetric monoidal stable \icats, and we write $\miCatst$ for the \icat of these.
(Note that by our convention, the tensor product is exact in both variables.)
They are an enhancement of tensor-triangulated categories, where our coefficients lived in \Cref{sec:why}.
\end{notn}

Here is the main definition.
While a coefficient lives on a single $B$-scheme we are now interested in the system of all coefficients on all $B$-schemes.
It seems natural to call this data a \emph{coefficient system}.
This terminology was introduced in~\cite{drew:MHM}.
Others have used different terms, see~\Cref{sec:other-approaches}.

\begin{defn}
\label{defn:cosy}%
A \emph{coefficient system (over $\base$)} is a functor
$C:\Sch[\base]{\text{op}}\to\miCatst$ satisfying the following axioms (where we write $f^{*}=C(f)$ for $f$ a morphism of $\base$-schemes).
\begin{enumerate}[(1)]
\item
\namedlabel{ax:left}{(Left)}
\label{Ax:left}%
For each smooth morphism $p:Y\to X\in\Sch[B]{}$, the functor $p^*:C(X)\to C(Y)$ admits a left adjoint $p_\sharp$, and:
\begin{description}
\item[\namedlabel{ax:smooth-bc}{(Smooth base change)}]
For each cartesian square
\[
\begin{tikzcd}
Y'
\ar[r, "p'" above]
\ar[d, "f'" left]
&
X'
\ar[d, "f" right]
\\
Y
\ar[r, "p" above]
&
X
\end{tikzcd}
\]
in $\Sch[B]{}$, the Beck-Chevalley transformation $p'_\sharp (f')^*\to f^*p_\sharp$ is an equivalence.
\item[\namedlabel{ax:projection}{(Smooth projection formula)}] The canonical transformation
\[
p_\sharp(p^*(-)\otimes -)\to -\otimes p_\sharp(-)
\]
is an equivalence.
\end{description}
\item
\namedlabel{ax:right}{(Right)}
For every $X\in\Sch[B]{}$ and every $f:Y\to X$:
\label{Ax:right}%
\begin{description}
\item[\namedlabel{ax:closed}{(Internal hom)}]
The symmetric monoidal structure on $C(X)$ is closed.
\item[\namedlabel{ax:pushforwards}{(Push-forward)}]
The pull-back functor $f^*$ admits a right adjoint $f_*:C(Y)\to C(X)$.
\end{description}

\item
\namedlabel{ax:loc}{(Localization)}
The \icat $C(\emptyset)= 0$ is trivial.
And for each closed immersion $i:Z\into X$ in $\Sch[B]{}$ with complementary open immersion $j:U\into X$, the square (see \Cref{rmk:cosy-def} below)
\begin{equation}
\label{eq:def-cosy-loc}
\begin{tikzcd}
C(Z)
\ar[r, "{i_*}"]
\ar[d]
&
C(X)
\ar[d, "j^*"]
\\
0
\ar[r]
&
C(U)
\end{tikzcd}
\end{equation}
is Cartesian in $\iCatst$.
\item
\label{Ax:hty-stable}%
For each $X\in\Sch[B]{}$, if $p:\affine^1_X\to X$ denotes the canonical projection with zero section $s:X\to\affine^1_X$, then:
\begin{description}
\item[\namedlabel{ax:hty}{($\affine^1$-homotopy)}]
The functor $p^*:C(X)\to C(\affine^1_X)$ is fully faithful.
\item[\namedlabel{ax:stable}{(Tate stability)}]
The composite $p_{\sharp} s_*:C(X)\to C(X)$ is an equivalence.
\end{description}
\end{enumerate}
\end{defn}

\begin{rmk}
\label{rmk:cosy-def}
Let us comment on these axioms and relate them to what we've seen in \Cref{sec:why}.
\begin{enumerate}
\item The existence of left adjoints $f_\sharp$ to inverse images along smooth morphisms is a consequence of relative purity, and we also discussed \bref{ax:projection} in this context.
The \bref{ax:smooth-bc} is another fundamental property although typically formulated as base change of inverse images (along smooth morphisms) against \emph{direct images}.
It was shown in \Cref{exc:sbc-alternative} that these two formulations are equivalent.
\item The structure of a coefficient system encodes only inverse images and tensor products.
The axiom \bref{ax:right} ensures that direct images and internal homs exist as well.
\item Applying \bref{ax:smooth-bc} to the Cartesian square (for $i,j$ as in \bref{ax:loc})
\[
  \begin{tikzcd}
    \emptyset
    \ar[r, "j'"]
    \ar[d, "i'" swap]
    &
    Z
    \ar[d, "i"]
    \\
    U
    \ar[r, "j"]
    &
    X
  \end{tikzcd}
\]
we obtain an equivalence $j'_\sharp (i')^*\isoto i^*j_\sharp$ and the former composite is null-homotopic since $C(\emptyset)=0$.
Taking right adjoints provides the homotopy $j^*i_*\simeq 0$ that is used in~(\ref{eq:def-cosy-loc}).\footnote{I'm grateful to Ryomei Iwasa for pointing out that an explanation of the axiom was required.}
In the presence of \bref{ax:pushforwards}, the square~(\ref{eq:def-cosy-loc}) is Cartesian if and only if the sequence of underlying triangulated categories
\[
\ho(C(Z))\xto{i_*}\ho(C(X))\xto{j^*}\ho(C(U))
\]
is a localization sequence so we recover the condition discussed in \Cref{sec:loc}.
\item The functor $p^*$ in \bref{ax:hty} is fully faithful if and only if the counit $p_\sharp p^*\to\id$ is an equivalence.
As observed in \Cref{exa:motive-smooth}, $p_\sharp p^*=p_!p^!$, and
\[
p_\sharp p^*F=p_\sharp(\unit\otimes p^* F)\isoto p_\sharp\unit\otimes F = p_\sharp p^*\unit\otimes F
\]
by \bref{ax:projection}.
From which one deduces that \bref{ax:hty} is equivalent to the $\aone$-homotopy property considered in \Cref{sec:why} (namely that the homology of the affine line is trivial).
\item Recall that the functor $p_\sharp s_*$ in \bref{ax:stable} was denoted by $\twistns{1}$ in \Cref{sec:why}.
\end{enumerate}
\end{rmk}

\begin{exa}
All theories mentioned in \Cref{sec:why} `should' be examples of coefficient systems.
This has been established for some of them, partially for others.
The only exception to that statement is the bounded derived category of coherent sheaves as usually conceived.
It is not invariant with respect to the affine line, and it does not admit $\sharp$-functoriality.
Nevertheless, there is work in this direction too, see~\cite{scholze:lectures-condensed}.
\end{exa}

In fact, all these examples fall into two important special cases:
\begin{notn}
A coefficient system $C$ is \emph{small} \resp{\emph{presentable}} if the functor takes values in (symmetric monoidal) small \resp{presentable} \icats \resp{and (symmetric monoidal) left-adjoint functors}.
\end{notn}

\begin{rmk}
Stable presentable \icats can be thought of as the \icategorical version of well-generated triangulated categories.
They satisfy a convenient adjoint functor theorem: A functor between presentable \icats is a left adjoint if and only if it preserves colimits.
The \icat of presentable \icats and left adjoint functors is denoted by $\PrL$.
It is anti-equivalent to the \icat of presentable \icats and \emph{right} adjoint functors, denoted~$\PrR$.

It follows that a functor $\Sch[B]{op}\to \mPrst$ automatically satisfies \bref{ax:right}.\footnote{By convention, symmetric monoidal presentable \icats are \emph{presentably symmetric monoidal}, that is, the tensor product commutes with colimits in each variable separately. (A better way of saying this is $\mPrL=\calg{\PrL}$.)}

\end{rmk}

\begin{defn}
\label{defn:cosy-morphism}
\begin{enumerate}
\item
Let $C,D:\Sch[B]{op}\to\miCatst$ be two coefficient systems.
A natural transformation $\phi:C\to D$ is a \emph{morphism of coefficient systems} if, for each smooth morphism $f$ of $B$-schemes, the induced transformation $f_\sharp \phi\to\phi f_\sharp$ is an equivalence.
\item We define the \icat of coefficient systems (over~$B$) as \subicat of the functor category:
\[
\CoSy{B}\subseteq\fun{\Sch[B]{op}}{\miCatst}
\]
One has obvious variants for small and presentable coefficient systems, denoted $\cosy{B}$ and $\CoSyPr{B}$, respectively.
\end{enumerate}
\end{defn}

\begin{exc}
\label{exc:localization-criteria}%
Let $C:\Sch[B]{op}\to\iCatst$ satisfying \bref{ax:smooth-bc} and \bref{ax:pushforwards}.
Show that the following are equivalent:
\begin{enumerate}[(i)]
\item $C$ satisfies \bref{ax:loc}.
\item $C$ satisfies the following three conditions:
\begin{enumerate}[(1)]
\item $C(\emptyset)=0$,\label{it:loc-0}
\item for each closed immersion~$i$, the functor~$i_*$ is fully faithful,\label{it:loc-ff}
\item if $j$ denotes the open immersion complementary to a closed immersion~$i$ then the pair $(i^*,j^*)$ is conservative.\label{it:loc-conservative}
\end{enumerate}
\end{enumerate}
\end{exc}
\subsection{Main result}
\label{sec:cosy-main-result}

The main result comes in two parts: The first wants to say that
\begin{quote}
\emph{coefficient systems underlie six-functor formalisms,}
\end{quote}
and the second wants to say that
\begin{quote}
\emph{morphisms of coefficient systems underlie morphisms of six-functor formalisms.}
\end{quote}
We refer to \Cref{rmk:justification} for the fine print.

\begin{rmk}
\label{rmk:presentable-warning}
If $T$ is a small stable \icat then there is an associated presentable stable \icat $\Ind(T)$, its \emph{Ind-completion}.
As we will discuss in more detail below (\Cref{sta:ind-cosy}), this process turns a small coefficient system into a presentable one.
So while the main results in this section are stated in the presentable context, they are equally true for small, and therefore for `all', coefficient systems (see \Cref{sta:thm-small}).
\end{rmk}

\begin{thrm}
\label{thm:I}
Let $C$ be a presentable coefficient system over~$B$.
Then there are functors (which are equal on objects)
\begin{align*}
  C=C^*:\Sch[B]{\textup{op}}&\to\PrLst\\
  C_!:\Sch[B]{}&\to\PrLst
                 \intertext{with global right adjoints}
                 C_*:\Sch[B]{}&\to\PrRst\\
  C^!:\Sch[B]{\textup{op}}&\to\PrRst
\end{align*}
and for each morphism $f$ of $B$-schemes, a transformation $f_!:=C_!(f)\to C_*(f)=:f_*$  which is invertible when $f$ is proper, satisfying the projection formula, smooth and proper base change, relative purity and `the rest'\ldots
\end{thrm}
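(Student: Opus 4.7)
The plan is to follow the Deligne-Nagata approach to exceptional functoriality, upgraded to the \icategorical setting. The inverse-image functors $f^*$ are part of the datum $C$ itself, and because $C$ lands in $\PrLst$ the right adjoints $f_*$ exist automatically (left adjoints in $\PrL$ being detected by colimit preservation). This gives the two ``$*$-legs'' $C^*$ and $C_*$ of the theorem. The core of the work is therefore the construction of $C_!$; once $C_!$ takes values in $\PrLst$, its right adjoint $C^!$ comes for free.

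First I single out two classes of morphisms for which $f_!$ has an obvious definition. For a closed immersion $i$, axiom \bref{ax:loc} makes $i_*$ fully faithful and the first arrow of a recollement, so $i^!$ exists and I set $i_!:=i_*$. For a smooth morphism $p$, and in particular for an open immersion $j$, axiom \bref{ax:left} provides the left adjoint $p_\sharp\dashv p^*$, and for open immersions I set $j_!:=j_\sharp$. By our standing convention every morphism in $\Sch[B]{}$ is separated and of finite type, so Nagata compactification (valid under the Noetherian hypothesis) furnishes a factorization $f=\bar{f}\circ j$ with $j$ an open immersion and $\bar{f}$ proper. Pointwise I then define
\[
f_!:=\bar{f}_*\circ j_\sharp.
\]

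Promoting this pointwise recipe to a genuine functor $C_!:\Sch[B]{}\to\PrLst$ is the main obstacle. Independence of the factorization can be checked by comparing two compactifications via a common refinement and invoking proper base change. The delicate point is coherence: composites $(fg)_!\simeq f_!\circ g_!$ must agree up to all higher homotopies, and naive choices of Nagata factorization do not assemble into an $\infty$-functor. The cleanest way I know to finesse this is to package $(f^*,f_!)$ into a single functor out of an \icat of correspondences $X\xfrom{\,j\,}\bullet\xto{\,\bar{f}\,}Y$, in the spirit of the machinery developed by Gaitsgory-Rozenblyum, Liu-Zheng and Mann; the axioms of a coefficient system are tailored to provide exactly the input such a machine requires. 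Proper base change itself, needed to compose spans, must be bootstrapped along the way: it is tautological for proper maps along closed immersions, follows from \bref{ax:smooth-bc} in the smooth-proper case, and the general case reduces to these via Chow's lemma, Noetherian induction, and the localization triangle supplied by \bref{ax:loc}.

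Once $C_!$ is in hand, the remaining assertions unfold with comparatively little pain. The natural transformation $f_!\to f_*$ is obtained by composing $\bar{f}_*$ (which already equals $\bar{f}_!$) with a canonical morphism $j_\sharp\to j_*$; when $f$ is itself proper the factorization collapses to $j=\id$ and the transformation is an equivalence. The proper projection formula is inherited from \bref{ax:projection} on the open leg and from proper base change on the proper leg. Relative purity for a smooth morphism is reduced, via Zariski-locality and \bref{ax:loc}, to the case of a trivial vector bundle, which is governed by \bref{ax:stable} and \bref{ax:hty}. The remainder -- the six-functor identities such as $f^*(-)\otimes f^!(-)\simeq f^!(-\otimes-)$, Verdier duality in the appropriate generality, and naturality of the whole construction in $C$ -- all follow by tracking the same four axioms through the constructions above.
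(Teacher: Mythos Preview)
Your outline is broadly correct and parallels the paper's strategy: Nagata compactification, the pointwise formula $f_!:=\bar{f}_*\circ j_\sharp$, reduction of proper base change via Chow's lemma and \bref{ax:loc}, and the observation that $C^!$ follows once $C_!$ lands in $\PrLst$. The genuine divergence is in how you package \emph{functoriality}. You reach for correspondence-category machinery (Liu--Zheng, Gaitsgory--Rozenblyum, Mann). The paper explicitly acknowledges this route but calls it ``rather complicated'' and instead presents the more elementary construction of~\cite{AGV}: one forms the category $\Comp[B]$ of pairs $(X,\bar{X})$, defines $C(X,\bar{X})_!\subset C(\bar{X})$ as the essential image of $j_\sharp$, observes that by \bref{supp} the already-existing functor $C_*\circ\pi$ restricts to these subcategories, and then sets $C_!:=\mathrm{LKE}_\omega C(-,-)_!$. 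The point is that all higher coherences are inherited from~$C_*$, so no new homotopies need to be supplied by hand; this buys a self-contained argument that avoids the heavy $(\infty,2)$-categorical input your approach requires.

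There is also a gap in your sketch of proper base change. You write that it ``follows from \bref{ax:smooth-bc} in the smooth-proper case'', but \bref{ax:smooth-bc} gives base change along a \emph{smooth} $g$, not base change of a smooth-proper~$f_*$ along an \emph{arbitrary}~$g$. The paper's argument is different: after reducing via Chow to $f=p\circ i$ with $i$ closed and $p:\mathbb{P}^d_Y\to Y$, the closed case is handled by \bref{ax:loc}, and for~$p$ one establishes the Atiyah-type identity $p_*\simeq p_\sharp\twist{-T_p}$ (a ``long and involved'' computation, cf.~\cite[Th\'eor\`eme~1.7.9]{ayoub07-thesis-1}). Proper base change for~$p$ then follows because both $p_\sharp$ and the Thom twist commute with arbitrary~$g^*$ by \bref{ax:smooth-bc} and closed base change. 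This same identity is also what shows $p_*$ preserves colimits, hence that $f_!$ is a left adjoint; your proposal does not indicate how this is obtained.
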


\begin{rmk}
We refer to~\cite[Scholie~1.4.2]{ayoub07-thesis-1} or \cite[Theorem~2.4.50]{cisinski-deglise:DM} for more extensive (but still incomplete) lists of properties.
Notably not included in this list is everything on duality, for which see \Cref{rmk:justification} and \Cref{sec:constructibility}.
Some aspects of the proof of \Cref{thm:I} will be discussed in \Cref{sec:how}.
\end{rmk}

\begin{thrm}
\label{thm:II}
Let $\phi:C\to D$ be a morphism of presentable coefficient systems.
Then there are natural transformations
\begin{align*}
  f^*\phi&\isoto\phi f^*\\
   \phi(-)\otimes\phi(-) &\isoto\phi ((-)\otimes (-))\\
   f_!\phi&\isoto\phi f_!\\
  \phi f_*&\to f_*\phi\\
   \phi \ihom(-,-)&\to \ihom(\phi(-), \phi(-))\\
   \phi f^!&\to f^!\phi
\end{align*}
the first three of which are always equivalences, and the last three of which are so `in good cases'.
\end{thrm}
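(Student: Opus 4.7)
The plan is to produce the first three equivalences directly from the definition of a morphism of coefficient systems together with the construction in \Cref{thm:I}, and then to obtain the remaining three transformations as adjoint mates, analyzing when they become equivalences. For the first two, the equivalence $f^*\phi \isoto \phi f^*$ is nothing but the naturality of $\phi$ as a natural transformation of functors $\Sch[B]{op}\to\miCatst$, and $\phi(-)\otimes\phi(-) \isoto \phi(-\otimes -)$ is the symmetric monoidality of each component $\phi(X)$; both are thus immediate from the data defining~$\phi$.

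For $f_!\phi \isoto \phi f_!$, I would proceed along the case analysis used to construct $f_!$ in \Cref{thm:I}. When $f$ is smooth, relative purity expresses $f_!$ as $f_\sharp$ composed with a Thom twist; the defining equivalence $f_\sharp\phi \isoto \phi f_\sharp$ for a morphism of coefficient systems, combined with compatibility of $\phi$ with the Thom construction (which is built from smooth $\sharp$-pushforwards, zero-section pushforwards, and the monoidal structure), then yields the desired equivalence. When $f$ is a closed immersion, $f_! = f_*$ is characterized by \bref{ax:loc} as the fully faithful left adjoint whose essential image is the kernel of the complementary $j^*$; since $\phi$ preserves $j^*$ and colimits (we are in $\PrLst$), this characterization passes across $\phi$. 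A general morphism factors by Nagata compactification as an open immersion followed by a proper morphism, and the proper case is handled by devissage through smooth-proper morphisms, closed immersions, and blow-up triangles, all of which are preserved by $\phi$ by the preceding cases.

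The last three transformations are obtained formally as mates: $\phi f_* \to f_*\phi$ from $f^*\phi \isoto \phi f^*$ under the adjunctions $f^* \dashv f_*$; the lax transformation on internal hom from the monoidality equivalence under $(-)\otimes(-) \dashv \ihom(-,-)$; and $\phi f^! \to f^!\phi$ from $f_!\phi \isoto \phi f_!$ under $f_! \dashv f^!$. Good cases in which these lax transformations become equivalences include: $f$ proper (where $f_* = f_!$ reduces the first to the third equivalence); restriction to dualizable objects (for the internal hom); and $f$ smooth (where relative purity reduces the last to the $f^*$-equivalence), or restriction to constructible objects.

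The main obstacle is the proper case of the third equivalence, since it requires the construction of $p_!$ for proper $p$ to be compatible with $\phi$ in a coherent $\infty$-categorical manner. The cleanest conceptual approach is to package \Cref{thm:I} as producing a symmetric monoidal functor out of a universal correspondence $(\infty,2)$-category, so that the present statement becomes one of $2$-functoriality for the assignment from coefficient systems to six-functor formalisms. A more hands-on route inductively checks compatibility with $\phi$ at each step of the construction of $f_!$; the inputs are supplied by the earlier cases, but the higher-coherence bookkeeping is nontrivial.
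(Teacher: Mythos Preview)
The paper does not supply a proof of \Cref{thm:II}; it is stated, illustrated by the remark that $\phi$ commutes with $f_*$ for $f$ proper and with $f^!$ for $f$ smooth, and then attributed to the literature in \Cref{rmk:attribution}. Your sketch is therefore not competing against an argument in the text, and on its own merits it is essentially correct: the first two equivalences are indeed part of the data of~$\phi$, the last three are the canonical mates, and the substantive content is the third.

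Your route to $f_!\phi\simeq\phi f_!$ is sound but slightly over-decomposed. The construction of $f_!$ in \Cref{sec:!-cs} proceeds by Nagata compactification $f=pj$, so the two cases that matter are open immersions (where $j_!=j_\sharp$ and the compatibility is the defining condition on morphisms of coefficient systems) and proper morphisms. For the latter, the devissage the paper uses for \bref{pbc} applies verbatim: \bref{ax:loc} plus Chow's lemma reduce to the projective case, which splits into closed immersions (handled exactly as you say, via the characterization of $i_*$ as the inclusion of $\ker(j^*)$ and conservativity of $(i^*,j^*)$) and the smooth projective projection~$p$, where the key input is the explicit formula~\eqref{eq:atiyah-duality-Pd}, $p_*\simeq p_\sharp\twist{-T_p}$. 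Compatibility of $\phi$ with $p_\sharp$ is by definition, and compatibility with Thom twists follows from the closed-immersion case together with $\sharp$-compatibility, exactly as you note. Your separate treatment of arbitrary smooth $f$ via relative purity is correct but not needed for the argument; it is rather a consequence (and is what yields the ``good case'' $\phi f^!\simeq f^!\phi$ for $f$ smooth).

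You correctly flag the genuine difficulty: making this compatibility coherent at the $\infty$-categorical level. The paper's own construction of $C_!$ in \Cref{sec:functoriality} (via the category $\Comp[B]$ and left Kan extension) is designed precisely so that such compatibilities can be checked componentwise after the fact; promoting $\phi$ to a transformation of the resulting $(!)$-functors amounts to observing that the construction is natural in~$C$, which is the ``hands-on route'' you allude to. Your alternative suggestion via a universal correspondence category is in the spirit of the approaches cited in \Cref{rmk:attribution} and \Cref{sec:other-approaches}.
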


\begin{rmk}
For example, $\phi$ commutes with direct image along \emph{proper} morphisms, and with exceptional inverse image along \emph{smooth} morphisms.
Further `good cases' will be discussed in \Cref{sec:constructibility}.
\end{rmk}

\begin{rmk}
\label{rmk:justification}
\Cref{thm:I,thm:II} are our main justification for viewing coefficient systems as a stand-in for six-functor formalisms.
Let us repeat the caveats already alluded to:
\begin{enumerate}[(a)]
\item The results are on the face of it about presentable coefficient systems only.
But analogous statements can be deduced for small coefficient systems~(\Cref{sta:thm-small}).
And all known examples are either small or presentable.

\item \Cref{thm:I} does not say anything about duality, an important topic in the context of six-functor formalisms (as briefly discussed in \Cref{sec:why}).
This is a consequence of our goal to be as encompassing as possible.
For general coefficient systems, duality cannot be expected unless one restricts to coefficients that are `small' in a certain sense.
This will be taken up again in our short discussion of constructibility (\Cref{sec:constructibility}).
\item The last caveat is related.
Namely, \Cref{thm:II} does not quite say that a morphism of coefficient systems `commutes' with the six operations.
However, in good cases it does so when restricted to `constructible coefficients', see \Cref{sec:constructibility}.
\end{enumerate}
\end{rmk}

\begin{rmk}
\label{rmk:attribution}%
It is clear that in \Cref{thm:I} the extension of a coefficient system $C=C^*$ to $C_!$ is essentially unique (see for details \Cref{sec:!-cs}).
The importance of the axioms \bref{ax:loc} and \bref{ax:hty} in \emph{constructing} the exceptional functoriality was first observed by Voevodsky and was formalized in his notion of \emph{cross-functors}~\cite{voevodsky:cross-functors}.
A version of \Cref{thm:I} was first proved by Ayoub~\cite{ayoub07-thesis-1}.
He worked at the level of triangulated categories and restricted to quasi-projective morphisms.
The latter restriction was removed by Cisinski-D\'eglise~\cite{cisinski-deglise:DM} albeit with an additional axiom.
The homotopy-theoretic difficulties in lifting these results to \icats were addressed by another host of mathematicians, including Liu-Zheng~\cite{liu-zheng:glueing,liu-zheng:6ff-artin-stacks}, Robalo~\cite{robalo:thesis} and Khan~\cite{khan:thesis}.
\end{rmk}

Many consequences can be deduced from \Cref{thm:I,thm:II}.
We refer to \cite{ayoub07-thesis-1,ayoub07-thesis-2,cisinski-deglise:DM} for comprehensive treatments.
As an example, we mention the following result.
It can be viewed as a distillation of the properties discussed in \Cref{sec:blowup} and in the language of \icats it becomes arguably even more powerful.
\begin{cor}
\label{sta:cosy-cdh}
Let $C$ be a presentable coefficient system.
The underlying functor
\[
C:\Sch[B]{\textup{op}}\to\iCat
\]
is a cdh-sheaf.
\end{cor}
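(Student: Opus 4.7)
The plan is to invoke the following characterization, essentially due to Voevodsky and lifted to $\infty$-presheaves in e.g.\ work of Asok--Hoyois--Wendt: a functor $F:\Sch[B]{op}\to\iCat$ is a cdh-sheaf if and only if $F(\emptyset)\simeq\pt$, $F$ sends every distinguished Nisnevich square (as in \Cref{exc:nisnevich-triangle}) to a Cartesian square, and $F$ sends every abstract blow-up square (as in \Cref{sec:blowup}) to a Cartesian square. The first condition is immediate from $C(\emptyset)=0$ in \bref{ax:loc}, noting that the zero object in $\iCatst$ is also terminal in $\iCat$.

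The two square-descent conditions I would treat in parallel. In each case we have a Cartesian diagram in $\Sch[B]{}$
\[
\begin{tikzcd}
Y' \ar[r, "\kappa'"] \ar[d, "f'"'] & X' \ar[d, "f"] \\
Y \ar[r, "\kappa"'] & X
\end{tikzcd}
\]
together with a preferred closed immersion $W\into X$: either $W=Y$ (blow-up case, where $\kappa$ itself is closed), or $W$ is the reduced structure on $X\setminus\kappa(Y)$ (Nisnevich case, where $\kappa$ is open). To show that $\Phi:C(X)\to C(Y)\times_{C(Y')}C(X')$ is an equivalence, I would use the recollement from \bref{ax:loc} applied to $W\into X$ (and, base-changed, to $W\times_X X'\into X'$) to reduce the check to the open and closed strata of $W\into X$; this is legitimate because the pair $(i^*,j^*)$ is conservative by \Cref{exc:localization-criteria}\ref{it:loc-conservative}. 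Over the open stratum $X\setminus W$, the base change of $f$ is tautologically an isomorphism in the blow-up case, and an isomorphism on reduced schemes in the Nisnevich case, so \Cref{exa:empty-reduced} makes the target pullback collapse to $C(X\setminus W)$, and $\Phi$ restricts to the identity. Over the closed stratum $W$, the Nisnevich hypothesis again forces $X'\times_X W\to W$ to be an isomorphism on reduced schemes, yielding a similarly trivial identification. In the blow-up case, however, the comparison over $W$ amounts to the base change identity $\kappa^* f_*\simeq f'_*\kappa'^*$ for the proper morphism~$f$; this is proper base change, which is one of the key outputs of \Cref{thm:I}.

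I expect the blow-up case to be the main obstacle: it rests critically on proper base change, itself deep content delivered by \Cref{thm:I}, so this corollary really does sit downstream of the whole development in \Cref{sec:cosy-main-result}. The Nisnevich case, by contrast, is formally quite soft, flowing from \bref{ax:loc} together with the insensitivity of coefficient systems to nilpotents (\Cref{exa:empty-reduced}). A secondary technical point, easy to overlook, is the compatibility of the recollement with the \icategorical pullback $C(Y)\times_{C(Y')}C(X')$ on each stratum; I would handle this by repeatedly using \bref{ax:smooth-bc} to commute the open restriction past the pullback diagram, and the properness of $f$ (via \Cref{thm:I}) to do the same for the closed restriction.
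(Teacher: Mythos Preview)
Your proposal is correct and takes essentially the same approach as the paper: invoke the square-by-square criterion for cdh-descent, handle $C(\emptyset)$ via \bref{ax:loc}, and verify the Nisnevich and abstract blow-up squares by combining the recollement from Localization with smooth and proper base change (the latter supplied by \Cref{thm:I}). The paper phrases the verification via the Mayer--Vietoris and blow-up exact triangles derived in \Cref{sec:blowup}, which is the same argument packaged differently; one small quibble is that your Nisnevich case is not quite as soft as you suggest---smooth base change for the \'etale map~$f$ is still needed to align the recollements, as you in fact acknowledge in your closing remark.
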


\begin{rmk}
For the precise meaning of this statement in general we refer to~\cite[\S\,2]{drew:MHM} or~\cite[\S\,2.3]{AGV}.
In the particular case of the cdh-topology, there is a very convenient criterion, however, for which see the proof below.

In practice, this means that $C$ can be studied locally for the cdh-topology.
For example, if $B=\Spec(k)$ with $k$ a characteristic zero field, then $C$ is uniquely determined by its restriction to $\mathrm{Sm}_k$, the category of smooth $k$-varieties.
\end{rmk}

\begin{proof}
By \bref{ax:loc}, the \icat $C(\emptyset)$ is final.
It then remains to check that $C$ takes distinguished Nisnevich \resp{abstract blow-up} squares to Cartesian squares in $\iCat$.
This amounts essentially to the existence of Nisnevich Mayer-Vietoris triangles and (abstract) blow-up triangles, which we deduced in \Cref{sec:blowup} from Localization and smooth and proper base change.
All of these hold in $C$, by \Cref{thm:I}.

For similar proofs see~\cite[\S\,3.3.a--b]{cisinski-deglise:DM} and~\cite[\S\,6.3]{hoyois:equiv6op}.
\end{proof}

\subsection{Other approaches}
\label{sec:other-approaches}

The framework of coefficient systems is closely related to others in the literature.
Let us summarize some of these relations, without trying to be exhaustive.
\begin{rmk}
\begin{enumerate}
\item A functor $C:\Sch[B]{\textup{op}}\to\miCatst$ is a coefficient system if and only if passing to homotopy categories produces a \textbf{closed symmetric monoidal stable homotopy 2-functor} $\ho(C):\Sch[B]{\textup{op}}\to\mTri$ in the sense of~\cite{ayoub07-thesis-1}.
Similarly, a natural transformation $\phi:C\to D$ between coefficient systems is a morphism of coefficient systems if and only if passing to homotopy categories produces a morphism of symmetric monoidal stable homotopy 2-functors.
\item
A functor $C:\Sch[B]{\textup{op}}\to\miCatst$ is a coefficient system if and only if passing to homotopy categories produces a \textbf{motivic triangulated category} $\ho(C):\Sch[B]{\textup{op}}\to\mTri$ in the sense of~\cite{cisinski-deglise:DM}.
This is not completely obvious since in \loccit an additional axiom (Adj) is assumed.
It follows from \Cref{thm:I} that this axiom is automatic.
\item
It follows from this last observation that presentable coefficient systems also have been considered before, under the name of \textbf{motivic categories of coefficients}~\cite{khan:thesis}.
\item Ultimately, a more complete and thus satisfying framework for six-functor formalisms might be provided by the technology of~\cite{gaitsgory-rozenblyum:dag}, using the \textbf{$(\infty,2)$-category of correspondences}.
However, some of this technology rests on assumptions that are---as far as we are aware---not yet proven in the literature.\footnote{In any case, presentable coefficient systems should extend uniquely to this framework, see~\cite[\S\,4.2]{khan:thesis}.}
\end{enumerate}
\end{rmk}
\subsection{Internal structure of framework}
\label{sec:cosy-structure}
From a bird's-eye view, the framework of coefficient systems consists of cohomology theories and their manifold relations.
For example, Grothendieck's comparison isomorphism between algebraic de\,Rham and Betti cohomology should be reflected in an isomorphism of coefficient systems over $\Spec(\CC)$,
\[
\Dbc\otimes\CC\simeq \Dbrh
\]
that is, by an enhanced version of the Riemann-Hilbert correspondence between (derived) constructible sheaves and regular holonomic $\cD$-modules (cf.\ \Cref{rmk:regulator}).
In particular, extending scalars at the level of cohomology groups is thus reflected by an operation at the level of coefficient systems.

This and many more phenomena should, in other words, be reflected in a rich internal structure of the \icat $\CoSy{}$.
We will be able to provide just a glimpse of this structure if only because mathematicians have barely started to investigate it systematically.

\subsubsection{Initial object}
\label{sec:initial-object}
Let's say we wanted to construct the `universal' coefficient system, that is, the initial object of $\CoSy{}$.
We would probably start with the initial required structure and then would try to freely enforce the axioms of coefficient systems one by one.
As we will see, this can in fact be done, more or less, and the resulting coefficient system turns out to be~$\SH$, (stable) motivic homotopy theory!

\begin{rmk}
One might find this result remarkable.
Without mentioning~$\SH$ in the definition, the \icat of coefficient systems knows about it in a strong sense.
It is probably less remarkable once one remembers that the approach to six-functor formalisms axiomatized in the notion of coefficient systems goes back to Voevodsky's study of the functoriality of $\SH(X)$ in~$X$.
\end{rmk}

We now put this into practice, trying to construct the universal coefficient system.
For more details and generalizations see~\cite{drew-gallauer:usf}.
\begin{cns}
The construction proceeds in several steps.
\begin{enumerate}[(1)]
\item Coefficient systems encode the $()^*$- and $\otimes$-structure.
The initial (as well as final) functor doing so is
\[
\pt:\Sch[]{\textup{op}}\to\miCat
\]
that sends every scheme to the final category with the only possible symmetric monoidal structure.\footnote{Note that stability is a \emph{condition} in the \icategorical world so we will not restrict to stable \icats initially but rather enforce it eventually. (In fact, it will come for free.)}
\item This is not the initial coefficient system because morphisms of coefficient systems are required to commute with $\sharp$-push-forwards.
For example, given a coefficient system $C$ and smooth morphism $p:P\to X$, we have an object
\[
\Hm_\bullet(P):=p_\sharp\unit\in C(X)
\]
and by \bref{ax:smooth-bc} and \bref{ax:projection} we have canonical equivalences
\[
\Hm_\bullet(P)\otimes \Hm_\bullet(P')\cong \Hm_\bullet(P\times_X P')
\]
in $C(X)$.
With some work one can show that
\[
\Hm_\bullet:\mathrm{Sm}_X\to C(X)
\]
defines a functor, symmetric monoidal with respect to the Cartesian structure on smooth $X$-schemes, and this suggests that the functor
\[
X\mapsto (\mathrm{Sm}_X)^\times\in\miCat
\]
is worth a closer look.
In fact, with more work one can show that it is the initial functor satisfying \bref{ax:left}.
\item Passing to the next axiom we see that \bref{ax:right} is not satisfied by this functor.
The only way we know of producing right adjoints in this context is to pass to presentable \icats in order to invoke adjoint functor theorems.
Thus:
\[
X\mapsto\psh{\mathrm{Sm}_X},
\]
the category of presheaves on $\mathrm{Sm}_X$ with the pointwise symmetric monoidal structure (which is the Day convolution in this case).
This forces us to work in the context of presentable \icats from now on though.
(Or at least \icats admitting small colimits.)
\item
It is unclear how one would go about freely enforcing~\bref{ax:loc}.
On the other hand, the axiom seems to be saying that many questions about a coefficient system can be studied locally for the Zariski topology.
And indeed, we saw that it plays an integral role in proving cdh-descent (\Cref{sta:cosy-cdh}).
This suggests that we could get some way towards the axiom by restricting to sheaves for the cdh-topology.
And this `works' except for the fact that the cdh topology isn't a very natural topology on \emph{smooth} schemes.\footnote{See~\cite{khan:cdh} for how to circumvent this problem.}
It turns out that the Nisnevich topology, lying between the Zariski and the cdh-topology, works even better and eventually gives the `same' result:
\[
X\mapsto \dL_{\textup{Nis}}\psh{\mathrm{Sm}_X}.
\]
Here we write $\dL_{\textup{Nis}}$ for the (accessible) localization of presentable \icats with respect to Nisnevich \v{C}ech covers.
\item
Enforcing the next two axioms, \bref{ax:hty} and \bref{ax:stable}, seems comparatively straightforward: We formally invert the canonical projection $\aone_P\to P$ for each smooth $P\to X$, and we formally $\otimes$-invert the cofiber of $P\xto{\infty}\pone_P$.
To make sense of the cofiber it is necessary to pass freely to pointed \icats.\footnote{There are also subtle technical difficulties related to the $\otimes$-structure for which we refer to \cite{robalo:thesis}.}
This doesn't bother us in the least since in the end we want to end up in stable \icats anyway.
Thus we set
\[
X\mapsto 
\left(
\dL_{\aone\cup\textup{Nis}}\psh{\mathrm{Sm}_X}_{\bullet}
\right)[(\pone,\infty)^{\otimes -1}].
\]
The \icat on the right is nothing but $\SH(X)$, the \emph{stable motivic (or $\aone$-)homotopy category} on $X$.
It is a presentable symmetric monoidal \icat.
Note that since $(\pone,\infty)=(\mathbb{G}_{\textup{m}},1)\otimes S^1$ (see \Cref{exc:tate}), the \icat is automatically stable.
\end{enumerate}
\end{cns}

\begin{rmk}
It follows that the resulting functor $\SH:\Sch[]{\textup{op}}\to\mPrst$ has the required shape and it remains to verify the axioms of a coefficient system.
All of them are formal except for \bref{ax:loc}.
The latter is proved by Morel-Voevodsky~\cite{morel-voevodsky-a1} under the name of \emph{Glueing Theorem}.
\end{rmk}

\begin{rmk}
Summarizing, there are at least three ways of thinking about stable motivic homotopy theory:
\begin{enumerate}[(a)]
\item Explicitly, the \icat $\SH(X)$ is constructed as
\[
\left(
\dL_{\aone\cup\textup{Nis}}\psh{\mathrm{Sm}_X}_{\bullet}
\right)[(\pone,\infty)^{\otimes -1}].
\]
\item Robalo~\cite{robalo:thesis} shows that this symmetric monoidal presentable \icat also admits a characterization:
Any $\otimes$-functor $\mathrm{Sm}_X\to D$ into a stable presentable \icat factors uniquely through $\SH(X)$ as soon as it satisfies Nisnevich-excision, $\aone$-invariance, and $\otimes$-inverts $(\pone,\infty)$.
\item By the discussion above~\cite{drew-gallauer:usf}, the coefficient system $\SH$ is the initial object of $\CoSyPr{}$.
\end{enumerate}
This ties back to \Cref{sec:hierarchy-invariants}:
While the second point concerns cohomology theories (at the set-level), the third point is the exact analogue at the  category-level and concerns six-functor formalisms.
\end{rmk}

\subsubsection{Ind-completion}
Since \Cref{thm:I,thm:II} apply to presentable coefficient systems only but many of the coefficient systems considered in \Cref{sec:why} are small, it is very useful to have a process that takes a small coefficient system and outputs a presentable one.
This process is simply Ind-completion (see \Cref{rmk:presentable-warning}).

\begin{prop}
\label{sta:ind-cosy}
There is a functor
\[
\Ind:\cosy{B}\to\CoSyPr{B}
\]
which takes a small coefficient system~$C$ to the functor $X\mapsto \Ind(C(X))$, the target being endowed with the Day convolution product.
\qed
\end{prop}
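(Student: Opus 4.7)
The plan is to leverage the well-known fact that Ind-completion defines a symmetric monoidal functor $\Ind: \miCatst \to \mPrst$ (see Lurie, \emph{Higher Algebra}, for the monoidal enhancement, where the target is equipped with Day convolution). Composing $C:\Sch[B]{\textup{op}}\to\miCatst$ with this $\Ind$ yields a functor $\Ind\circ C: \Sch[B]{\textup{op}}\to \mPrst$ whose value on $X$ is $\Ind(C(X))$, and the content of the proposition is to verify that this composite still satisfies the axioms of \Cref{defn:cosy} and that the construction is functorial on morphisms of coefficient systems.

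For each morphism $f$ of $B$-schemes, the exact functor $f^*:C(X)\to C(Y)$ has Ind-completion $\Ind(f^*)$, which preserves all colimits and is therefore a morphism in $\PrL$; by the adjoint functor theorem it admits a right adjoint $f_*$. This gives \bref{ax:right}. For smooth $p$, the existing adjunction $p_\sharp\dashv p^*$ at the small level (with both functors exact) passes to Ind-completions to give $\Ind(p_\sharp)\dashv \Ind(p^*)$, establishing the left adjoint in \bref{ax:left}. The natural equivalences expressing \bref{ax:smooth-bc} and \bref{ax:projection} persist because $\Ind$ is a functor and the universal property of Ind-completion extends any natural transformation of exact functors on small categories to a natural transformation of colimit-preserving functors on Ind-completions; equivalences are preserved. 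Likewise \bref{ax:hty} follows because fully faithfulness is preserved by $\Ind$, and \bref{ax:stable} because $\Ind$ sends equivalences to equivalences.

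The delicate point, and probably the main obstacle, is \bref{ax:loc}. The cleanest approach is to apply the criterion of \Cref{exc:localization-criteria}. At the small level, \bref{ax:loc} is equivalent to: $C(\emptyset)=0$, $i_*$ is fully faithful, and $(i^*,j^*)$ is conservative. The first two properties pass to Ind-completions directly (Ind of the zero category is zero, and Ind preserves fully faithful exact functors). For conservativity, one observes that $\Ind(i^*)$ and $\Ind(j^*)$ are both colimit-preserving and conservative on compact objects, and since every object of $\Ind(C(X))$ is a filtered colimit of compact ones, a morphism sent to $0$ by both must already be~$0$. One then checks that the hypotheses of \Cref{exc:localization-criteria} now apply to $\Ind\circ C$, using that $j^*i_*\simeq 0$ at the Ind-level follows by extension from the small level, hence \bref{ax:loc} holds.

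Finally, if $\phi:C\to D$ is a morphism in $\cosy{B}$, then componentwise Ind-completion produces a natural transformation $\Ind(\phi)$ whose components are colimit-preserving (hence morphisms in $\mPrst$), and the Beck–Chevalley equivalence $f_\sharp\Ind(\phi)\simeq \Ind(\phi) f_\sharp$ follows from the corresponding equivalence at the small level by naturality of Ind. Functoriality in $\phi$ and in composition of such morphisms is then immediate from the (2-)functoriality of the symmetric monoidal Ind-completion, yielding the desired functor $\Ind:\cosy{B}\to\CoSyPr{B}$.
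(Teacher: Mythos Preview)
The paper does not give a proof; the proposition is immediately followed by an exercise whose only hint is that an adjunction $f\dashv g$ between small stable \icats extends to an adjunction $\Ind(f)\dashv\Ind(g)$ between their Ind-completions. Your overall approach is the intended one, and most of it is correct.

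There is, however, one genuine gap: the conservativity step in your verification of \bref{ax:loc}. You assert that a pair of colimit-preserving functors that is jointly conservative on compact objects is jointly conservative on the whole Ind-completion, because ``every object is a filtered colimit of compact ones''. This inference is invalid: from $i^*(\colim_\alpha M_\alpha)\simeq 0$ and $j^*(\colim_\alpha M_\alpha)\simeq 0$ one cannot deduce anything about the individual~$M_\alpha$. For a single-functor counterexample, the functor $\D(\Z)\to\prod_p\D(\FF_p)$ sending $M$ to $(M\otimes^{\dL}\FF_p)_p$ is colimit-preserving and conservative on perfect complexes, yet annihilates~$\Q$.

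The fix uses precisely the paper's hint. At the small level, \bref{ax:loc} furnishes the natural fiber sequence $j_\sharp j^*\to\id\to i_*i^*$ of exact endofunctors of~$C(X)$ (cf.\ \Cref{rmk:localization}). Applying $\Ind$ termwise, and using the hint to identify $\Ind(i_*)$ as the right adjoint to $\Ind(i^*)$ and $\Ind(j_\sharp)$ as the left adjoint to $\Ind(j^*)$, one obtains the same fiber sequence of colimit-preserving endofunctors of $\Ind(C(X))$. Now $\Ind(i^*)M\simeq 0$ and $\Ind(j^*)M\simeq 0$ force $M\simeq 0$, establishing joint conservativity. With this correction the rest of your argument goes through.
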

\begin{exc}
Prove this result.
(A useful fact is that if $f\dashv g$ is an adjunction between small stable \icats, then their unique colimit-preserving extensions $\Ind(f)\dashv \Ind(g)$ again form an adjunction between their Ind-completions.)
\end{exc}

\begin{rmk}
\label{rmk:small-ind}
So, given a small coefficient system~$C$, we apply \Cref{thm:I} to $\Ind(C)$ and obtain the full six operations on the system of \icats $\Ind(C(X))$.
However, at this point we do not know whether the exceptional functoriality restricts to the subsystem $C(X)\subset \Ind(C(X))$.
It turns out that it does and the proof is not difficult.
\end{rmk}

\begin{lem}
\label{sta:!-preserve}
Let $f:X\to Y$ be a morphism of $B$-schemes, let $M\in C(X)$ and $N\in C(Y)$.
Then
\begin{align*}
f_!M\in C(Y),&& f^!N\in C(X).
\end{align*}
\end{lem}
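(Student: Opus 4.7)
The plan is to exploit Nagata compactification: every $f : X \to Y$ in $\Sch[B]{}$ factors as $f = \bar{f} \circ j$ with $j : X \hookrightarrow \bar{X}$ an open immersion and $\bar{f} : \bar{X} \to Y$ proper. The exceptional functoriality of \Cref{thm:I} is itself pseudo-functorial, so
\[
f_! \simeq \bar{f}_! \circ j_!, \qquad f^! \simeq j^! \circ \bar{f}^!,
\]
and it suffices to deal with the two constituent cases separately.

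For $f_! M \in C(Y)$, I would first recall that $\bar{f}_! \simeq \bar{f}_*$ since $\bar{f}$ is proper (\Cref{sec:ppf}), while $j_! \simeq j_\sharp$ by relative purity for étale morphisms (\Cref{exa:etale-purity}). Hence $f_! \simeq \bar{f}_* \circ j_\sharp$, and both $\bar{f}_*$ (by \bref{ax:pushforwards}) and $j_\sharp$ (by \bref{ax:left}) are structural functors of the small coefficient system~$C$; applying this composite to $M \in C(X)$ stays within~$C$.

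For $f^! N \in C(X)$, the same purity gives $j^! \simeq j^*$, which preserves $C$ tautologically, so the task reduces to showing that $\bar{f}^!$ sends $C(Y)$ into $C(\bar{X})$ for $\bar{f}$ proper. Locally on $\bar{X}$ the morphism $\bar{f}$ is projective, hence factors as a closed immersion $i$ into some $\projective^n_Y$ followed by the smooth structural morphism $p$, and then $\bar{f}^! \simeq i^! \circ p^!$. For $p$ smooth, relative purity gives $p^! \simeq \twistns{T_p} p^*$ (\Cref{rmk:relative-purity}), a composite of $p^*$ with the Thom twist $p_\sharp s_*$, both small-level operations. For $i$ a closed immersion, the recollement encoded by \bref{ax:loc} provides $i^!$ at the small level: $i_*$ is fully faithful and $i_* i^! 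N$ can be realised as the fibre of the unit $N \to j'_* (j')^* N$ in $C(\projective^n_Y)$, where $j'$ denotes the complementary open immersion and $j'_*$ exists by \bref{ax:right}; applying the inverse equivalence of $i_*$ on its essential image yields $i^! N \in C(\bar{X})$.

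The main obstacle is the patching step for a general proper $\bar{f}$ that is not quasi-projective on the nose, since Nagata's theorem produces properness but not projectivity. I would handle this by invoking Chow's lemma to dominate $\bar{f}$ by a projective morphism and using the cdh-descent of \Cref{sta:cosy-cdh} to descend the small-level construction of $\bar{f}^!$ from the Chow envelope back to $\bar{X}$. Once this gluing is in place the building blocks are all small-preserving, and both $f_!$ and $f^!$ restrict to the small subsystem, as required.
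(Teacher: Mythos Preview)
Your treatment of $f_!$ matches the paper's exactly. For $f^!$, however, the initial Nagata step is a detour that buys nothing: having reduced to a proper~$\bar f$, the only fact you then use about~$\bar f$ is that it is locally on the source quasi-projective --- but so is any separated morphism of finite type, including~$f$ itself. The paper proceeds more directly: it first shows, using \bref{ax:loc}, that membership in $C(X)\subset\Ind(C(X))$ is Zariski-local on~$X$ (if $L|_{U_i}\in C(U_i)$ for an open cover, the localisation triangles rebuild~$L$ inside~$C(X)$). Since for an open immersion~$j$ one has $j^*\simeq j^!$, the restriction $(f^!N)|_U$ is $(f|_U)^!N$, and locally $f|_U$ is quasi-projective, hence factors as a closed immersion followed by a smooth morphism. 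Your treatment of these two building blocks (closed immersion via the recollement, smooth via relative purity) is correct and agrees with the paper.

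There are two concrete problems with your version. First, the restriction of a proper morphism to an open of the source is \emph{not} projective (it is not even proper), only quasi-projective; accordingly the local factorisation is a closed immersion into an \emph{open} of~$\projective^n_Y$, not into~$\projective^n_Y$ itself. Second, and more seriously, the Chow-plus-cdh patching you propose does not work as stated. The descent statement of \Cref{sta:cosy-cdh} concerns the $*$-pullback functoriality, so it would tell you that $\bar f^!N\in C(\bar X)$ if and only if $g_\alpha^*(\bar f^!N)\in C(U_\alpha)$ for a cdh cover $g_\alpha:U_\alpha\to\bar X$. But for the Chow envelope $\pi:\bar X'\to\bar X$ what you can control is $\pi^!\bar f^!N=(\bar f\pi)^!N$, not $\pi^*\bar f^!N$, and the two differ since~$\pi$ is neither smooth nor an open immersion. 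The paper's Zariski-local argument avoids this mismatch precisely because $j^*\simeq j^!$ for open immersions~$j$; no Chow's lemma or cdh machinery is needed.
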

\begin{proof}[Proof sketch]
For the first statement we factor $f$ as an open immersion followed by a proper morphism and reduce to proving each case separately.
In the latter case we have $f_!M=f_*M\in C(Y)$ and we win.
In the former we have $f_!M=f_\sharp M\in C(Y)$ and we win again.

For the second statement we use \bref{ax:loc} to show that an object $L\in \Ind(C(X))$ belongs to $C(X)$ if (and only if) $L|_{U_i}\in C(U_i)$ for some open cover $(U_i)$ of~$X$.
In other words, the question is local on~$X$.
In particular, we can assume that $f$ is quasi-projective, and factor it as a closed immersion followed by a smooth morphism.
The first case then follows from \bref{ax:loc} and the second case follows from relative purity.
\end{proof}

\begin{exc}
Fill in the details of this proof sketch.
\end{exc}

\begin{cor}
\label{sta:thm-small}
\Cref{thm:I,thm:II} admit analogues for small coefficient systems.\footnote{Of course, in this case the four functors $()^*, ()_*, ()_!, ()^!$ take values in \emph{small} stable \icats.}
\end{cor}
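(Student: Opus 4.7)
The plan is to bootstrap both theorems from their presentable analogues via Ind-completion (Proposition \ref{sta:ind-cosy}) and then to restrict the resulting structure along the inclusions $C(X) \subset \mathrm{Ind}(C(X))$ using Lemma \ref{sta:!-preserve}. Concretely, given a small coefficient system $C$, I would apply Theorem \ref{thm:I} to the presentable coefficient system $\mathrm{Ind}(C)$ to obtain the four global functors $(-)^*, (-)_*, (-)_!, (-)^!$ (together with $\otimes$, $\ihom$, and all the structural natural transformations) at the level of $\mathrm{Ind}(C(X))$, and then argue that each of them preserves the essential image of $C(X)$.

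For the analogue of Theorem \ref{thm:I}, five of the six operations already live at the small level by definition of a small coefficient system: the inverse image $f^*$, the tensor product, the internal hom and the pushforward $f_*$ are part of the given data (axioms \ref{ax:right} together with the hypothesis that $C$ takes values in $\miCatst$), and the left adjoint $f_\sharp$ along smooth $f$ is provided by axiom \ref{ax:left}. Since Ind-completion of a left adjoint between small stable $\infty$-categories is computed pointwise on compact objects, these all agree on $C(X)$ with their counterparts in $\mathrm{Ind}(C)$. The only nontrivial point is that the two exceptional functors $f_!$ and $f^!$ (which a priori are only defined on the Ind-completion) restrict to $C$, and this is exactly the content of Lemma \ref{sta:!-preserve}. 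Once all six operations are known to restrict, the compatibilities (projection formula, proper and smooth base change, relative purity, the map $f_! \to f_*$, etc.) are inherited by restriction, using that $C(X) \hookrightarrow \mathrm{Ind}(C(X))$ is fully faithful so that equivalences remain equivalences.

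For the analogue of Theorem \ref{thm:II}, I would start from the fact that the Ind-completion functor of Proposition \ref{sta:ind-cosy} is functorial in morphisms of coefficient systems: a morphism $\phi \colon C \to D$ in $\cosy{B}$ produces a morphism $\mathrm{Ind}(\phi) \colon \mathrm{Ind}(C) \to \mathrm{Ind}(D)$ in $\CoSyPr{B}$. Applying Theorem \ref{thm:II} to $\mathrm{Ind}(\phi)$ yields the six natural transformations comparing the operations on $\mathrm{Ind}(C)$ and $\mathrm{Ind}(D)$; since each functor occurring as source or target has already been shown to restrict to the small subcategories, the natural transformations themselves restrict. The `good cases' in which the last three transformations are invertible transfer verbatim from the presentable setting.

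The main obstacle is really the one already cleared by Lemma \ref{sta:!-preserve}: verifying that the exceptional functors built inside the ambient presentable system do not leak out of the small subsystem. Everything else is formal bookkeeping, provided one is careful that Ind-completion is recorded as a functor $\cosy{B} \to \CoSyPr{B}$ (so that coherence data for $\phi$ passes automatically to $\mathrm{Ind}(\phi)$), and that the natural transformations of Theorem \ref{thm:II}, being natural in the objects, automatically restrict to any subsystem preserved by both source and target.
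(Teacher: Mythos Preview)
Your proposal is correct and follows exactly the approach the paper intends: Ind-complete via Proposition~\ref{sta:ind-cosy}, apply the presentable theorems, and then invoke Lemma~\ref{sta:!-preserve} to restrict the exceptional functors back to the small level (as outlined in Remark~\ref{rmk:small-ind}). The paper leaves the corollary without explicit proof, treating it as immediate from the preceding lemma; your write-up simply spells out the bookkeeping.
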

\subsubsection{Constructibility}
\label{sec:constructibility}
Let $C$ be a coefficient system.

\begin{notn}
Denote by $\Cgm(X)\subset C(X)$ the smallest full \subicat that
\begin{enumerate}[(1)]
\item contains $f_\sharp\one\twist{n}$ for $f:Y\to X$  smooth, $n\in\Z$, and
\item is stable and closed under direct factors (we call such subcategories \emph{thick}).
\end{enumerate}
This defines the subfunctor $\Cgm\subseteq C$ \emph{of geometric origin} (see \Cref{sta:gm-sub}).
\end{notn}

\begin{lem}
\label{sta:gm-sub}
$\Cgm\subseteq C:\Sch[]{\textup{op}}\to\miCatst$ is a subfunctor and the inclusion $\Cgm\to C$ commutes with $f_\sharp$ for $f$ smooth.
\end{lem}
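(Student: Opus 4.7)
The plan is to verify three closure properties of the assignment $X\mapsto\Cgm(X)$: (i) each $\Cgm(X)$ is a symmetric monoidal stable subcategory of $C(X)$ (so that the inclusion lands in $\miCatst$), (ii) for every morphism $f\colon X'\to X$ in $\Sch[B]{}$, the inverse image $f^{*}$ carries $\Cgm(X)$ into $\Cgm(X')$ (so that $\Cgm$ is a subfunctor of $C$), and (iii) for every smooth $f\colon X\to Y$, the left adjoint $f_{\sharp}$ carries $\Cgm(X)$ into $\Cgm(Y)$ (which is the content of the ``commutes with $f_\sharp$'' assertion). In all three cases the functor to be checked (namely $f^{*}$, $f_{\sharp}$, or $-\otimes -$ in each variable) is exact and therefore preserves the thick-subcategory closure. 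Hence it is enough to verify each claim on the set of generators $\{g_{\sharp}\unit\twist{n} : g\text{ smooth},\, n\in\Z\}$.

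For (iii), given a smooth $g\colon Z\to X$, the composite $fg\colon Z\to Y$ is smooth, and the smooth projection formula~\bref{ax:projection} applied with the Tate twist (which passes through $f^{*}$ since $f^{*}\unit\twist{n}\simeq\unit\twist{n}$) yields
\[
f_{\sharp}\bigl(g_{\sharp}\unit\twist{n}\bigr)\simeq (fg)_{\sharp}\unit\twist{n}\in\Cgm(Y).
\]
For (ii), given smooth $g\colon Y\to X$, form the Cartesian square with $g'\colon Y\times_{X}X'\to X'$ (smooth) and $f'\colon Y\times_{X}X'\to Y$. Then \bref{ax:smooth-bc} gives $f^{*}g_{\sharp}\simeq g'_{\sharp}(f')^{*}$, and because $f^{*}$ and $(f')^{*}$ are symmetric monoidal (hence preserve $\unit$ and commute with Tate twists, compatibility of the Tate object under pullback being a consequence of base change applied to the diagram defining $\twist{1}$), one obtains $f^{*}(g_{\sharp}\unit\twist{n})\simeq g'_{\sharp}\unit\twist{n}\in\Cgm(X')$.

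For (i), note first that $\unit = \id_{X,\sharp}\unit\twist{0}$ is a generator. For the tensor product of two generators $g_{\sharp}\unit\twist{n}$ and $g'_{\sharp}\unit\twist{m}$ with $g\colon Y\to X$, $g'\colon Y'\to X$ smooth, iterated application of the projection formula and smooth base change (using the pullback $\tilde g'\colon Y\times_{X}Y'\to Y$ of $g'$ along $g$) gives
\[
g_{\sharp}\unit\twist{n}\otimes g'_{\sharp}\unit\twist{m}\simeq (g\circ\tilde g')_{\sharp}\unit\twist{n+m},
\]
and $g\circ\tilde g'$ is smooth, so this lies in $\Cgm(X)$. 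The main bookkeeping point throughout is the correct handling of Tate twists under pullback and under $f_{\sharp}$, but this is dispatched by the projection formula together with the fact that the Tate twist is tensoring with a $\otimes$-invertible object whose formation is stable under pullback. Once the three generator-level computations are in hand, the thickness of $\Cgm(-)$ finishes the proof.
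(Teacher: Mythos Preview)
Your proof is correct and follows exactly the approach the paper indicates: the paper's own proof is the single sentence ``This follows immediately from \bref{ax:smooth-bc} and \bref{ax:projection},'' and you have simply spelled out what that sentence means on generators. The three closure checks you perform are precisely the content of invoking smooth base change for (ii), the projection formula (together with pseudo-functoriality of $(-)_\sharp$) for (iii), and both together for (i).
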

\begin{proof}
This follows immediately from \bref{ax:smooth-bc} and \bref{ax:projection}.
\end{proof}

\begin{exa}
\label{exa:beilinson-motives}
\begin{enumerate}
\item For $C=\SH$, the geometric part coincides with the compact part: The objects of $\SH^{\textup{gm}}(X)$ are precisely the compact objects in $\SH(X)$.\footnote{Recall that in an \icat $\cC$ with filtered colimits, an object $M$ is compact if $\map[\cC]{M}{-}:\cC\to\mathcal{S}$ to the \icat of spaces preserves filtered colimits. If $\cC$ is a stable \icat, this can be tested at the level of homotopy categories and is equivalent to $M$ being compact in the sense of triangulated categories: $\hom_{\ho(\cC)}(M,-)$ preserves direct sums.}
Moreover, $\SH$ is compactly generated so that $\Ind(\SH^{\textup{gm}})=\SH$.
\item The same is true for $C=\DMB$, Beilinson motives in the sense of~\cite{cisinski-deglise:DM} (or rather, their \icategorical enhancement).
That is, $\DMBgm$ is the compact part and $\Ind(\DMBgm)=\DMB$.
Moreover, if $k$ is a field there is a canonical equivalence
\[
\DMBgm(\Spec(k))=\DM^{\textup{gm}}(k;\Q)
\]
with (the \icategorical enhancement of) Voevodsky's category of geometric motives with rational coefficients~\cite{voevodsky00-mm}.
\item In the $\ell$-adic setting, `of geometric origin' is close to `bounded-constructible', see~\cite{cisinski-deglise:etale-motives}.
\end{enumerate}
\end{exa}

\begin{rmk}
Let $C$ be a presentable (or just cocomplete) coefficient system.
By \Cref{sec:initial-object}, there is a unique morphism of coefficient systems $\SH\to C$, and $\Cgm\subseteq C$ is exactly the thick subfunctor generated by the image of $\SH^{\textup{gm}}$.
\end{rmk}

We should now address the question whether $\Cgm$ is a coefficient system as well.
We will not state sufficient conditions here and refer to the literature instead:

\begin{thrm}[{\cite[\S\,3]{ayoub:betti}, \cite[\S\,4.2]{cisinski-deglise:DM}}]
In `good cases', $\Cgm$ is a coefficient system.
\end{thrm}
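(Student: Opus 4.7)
The plan is to verify that the thick subfunctor $\Cgm\subseteq C$ inherits each axiom of a coefficient system, leveraging \Cref{sta:gm-sub} (which already shows that $\Cgm$ is a subfunctor closed under $f^{\ast}$ and $f_{\sharp}$ for smooth~$f$) and using \Cref{thm:I} to freely import the six operations on~$C$. My strategy is to identify which of those six operations already manifestly preserve the class of objects ``of geometric origin'', and to isolate the one or two places where genuine geometric input (the ``good case'' hypothesis) is needed.

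First I would handle the easy closure properties. Since $f^{\ast}$ is symmetric monoidal and the generators $f_{\sharp}\one\twist{n}$ are closed under tensor products (by \bref{ax:smooth-bc} and the smooth projection formula, one has $f_{\sharp}\one\twist{n}\otimes g_{\sharp}\one\twist{m}\simeq (f\times_{X}g)_{\sharp}\one\twist{n+m}$), the subcategory $\Cgm(X)$ is stable under $\otimes$, and $f^{\ast}$ restricts to a symmetric monoidal functor $\Cgm(Y)\to\Cgm(X)$. The axioms \bref{ax:hty} and \bref{ax:stable} are statements about the generators directly and transfer to any thick subfunctor containing them. For \bref{ax:loc}, the conditions \ref{it:loc-0}, \ref{it:loc-ff}, \ref{it:loc-conservative} of \Cref{exc:localization-criteria} only involve $i_{\ast}$, $j^{\ast}$, $i^{\ast}$, and these restrict to~$\Cgm$ once one knows $i_{\ast}$ does, so I would reduce localization to preservation of $\Cgm$ by the remaining operations.

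Next I would tackle $f_{!}$ and $f_{\ast}$. By compactification, write $f=\bar{f}\circ j$ with $j$ an open immersion and $\bar{f}$ proper. For the open piece, $j_{!}=j_{\sharp}$ preserves $\Cgm$ by \Cref{sta:gm-sub}. For the proper piece, $\bar{f}_{!}=\bar{f}_{\ast}$, and here I would need the genuinely nontrivial input: that $\bar{f}_{\ast}$ sends generators $g_{\sharp}\one\twist{n}$ to geometrically-originated objects. In ``good cases'' this is supplied by resolution of singularities and Chow's lemma, which allow one to reduce to the case of a projective morphism between smooth schemes and then use proper base change, \bref{sbc}, and relative purity to rewrite $\bar{f}_{\ast}g_{\sharp}\one\twist{n}$ as a bounded combination of generators on the base. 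Once $f_{\ast}$ is known to preserve $\Cgm$ for proper~$f$, closedness under $f_{!}$ in general follows.

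For $f^{!}$, I would similarly work locally on the target and factor $f$ through a closed immersion followed by a smooth morphism. The smooth case is handled by relative purity: $f^{!}\simeq\twistns{T_{f}}f^{\ast}$, so $f^{!}$ preserves $\Cgm$ because both $f^{\ast}$ and the Thom twist do (the latter by \Cref{rmk:thom-ktheory} applied to $T_{f}$ together with \bref{ax:stable}). The closed immersion case is controlled by the localization triangle $i_{\ast}i^{!}\to\id\to j_{\ast}j^{\ast}$: since $\id$ and $j_{\ast}j^{\ast}$ land in $\Cgm$ (using the proper/open analysis above for $j_{\ast}$), so does $i_{\ast}i^{!}$, and one extracts $i^{!}$ using full faithfulness of $i_{\ast}$. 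Finally, internal hom is treated by the same duality yoga, reducing $\ihom(M,N)$ on generators to expressions involving $f^{!}$ and $\otimes$ via the adjunction $f_{\sharp}\dashv f^{\ast}$ and the projection formulas; in the ``good cases'' this terminates because the generators turn out to be rigid (\Cref{exc:atiyah}) after allowing appropriate Thom twists. I expect the main obstacle to be precisely the preservation of $\Cgm$ under $f_{\ast}$ for proper~$f$: this is where one pays with a resolution-of-singularities or $\ell$-adic finiteness hypothesis, and where the theorem honestly needs the ``good case'' qualifier.
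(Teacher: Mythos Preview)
The paper does not give its own proof of this theorem; it merely states the result and defers to \cite[\S\,3]{ayoub:betti} and \cite[\S\,4.2]{cisinski-deglise:DM}. So there is no in-paper argument to compare against, and your outline should be judged on its own terms and against those references.

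Your overall architecture is the standard one and matches the cited literature: reduce everything to showing that the remaining operations ($f_*$, $\ihom$, and implicitly $i_*$ for localization) preserve~$\Cgm$, then attack $f_*$ via compactification and Chow/resolution, and $\ihom$ via rigidity of generators. That part is fine.

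There is, however, a genuine gap. In your paragraph on $f_!$ and $f_*$ you only establish that $j_!=j_\sharp$ preserves~$\Cgm$ for an open immersion~$j$, and that $\bar f_*=\bar f_!$ does for a proper~$\bar f$. You never address $j_*$ for open immersions. Yet you invoke it twice: the axiom \bref{ax:pushforwards} requires $f_*$ to restrict to~$\Cgm$ for \emph{all} morphisms~$f$, not just proper ones; and in your treatment of $i^!$ you write that ``$j_*j^*$ lands in $\Cgm$ (using the proper/open analysis above for $j_*$)'', but no such analysis was given. Compactification yields $f_!=\bar f_*\,j_\sharp$, not $f_*=\bar f_*\,j_*$, so the decomposition you wrote down simply does not touch~$j_*$.

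This is not a cosmetic omission. In the references, preservation of constructibility under $j_*$ for an open immersion is typically the most delicate step: it is where resolution of singularities (or alterations) and an induction on dimension genuinely enter, and in some treatments it is deduced only \emph{after} establishing duality on~$\Cgm$ (so that $j_*\simeq\DD\,j_!\,\DD$). Your sketch therefore has a circularity risk as written: you use $j_*$ to get $i^!$, but the standard way to get $j_*$ goes through duality, which in turn needs~$f^!$. You should either (a) prove $j_*$ preserves $\Cgm$ directly by a Noetherian/dimension induction using localization and the proper case (this is essentially Ayoub's approach), or (b) reorganize so that rigidity/duality for the generators is established first and $j_*$ is then obtained as $\DD j_!\DD$ (closer to Cisinski--D\'eglise). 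Either way, this is precisely the point that carries the ``good cases'' hypothesis, and it deserves to be made explicit rather than assumed.
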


\begin{exa}
An example to which the theorem applies is Beilinson motives (\Cref{exa:beilinson-motives}).
In particular one obtains, in this case, a very satisfying picture translating between small and compactly generated coefficient systems:
\[
\begin{tikzcd}
\cosy{}\ni\DMBgm
\ar[r, "{\Ind}", bend left=30, mapsto]
&
\DMB\in\CoSyPr{}
\ar[l, "{(-)^{\textup{gm}}}", bend left=30, mapsto]
\end{tikzcd}
\]
\end{exa}

\begin{cor}
In the same `good cases' assume $\phi:C\to D$ is a morphism of coefficient systems.
Then
\[
\phi|_{\Cgm}:\Cgm\to D
\]
commutes with all six functors.
\end{cor}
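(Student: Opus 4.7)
The plan is to reduce each of the six compatibilities to a check on a thick generating family for $\Cgm$. By construction, $\Cgm(X)$ is the smallest thick subcategory of $C(X)$ containing the objects $g_\sharp\unit\twist{n}$ for $g$ smooth and $n\in\Z$, and the locus of equivalence of a natural transformation between exact functors is itself thick. Since the morphism of coefficient systems $\phi$ commutes with $g_\sharp$ by definition, with the monoidal unit, and with Tate twists (because $\twist{1}=p_\sharp s_*\unit$ for $s$ a closed immersion, and $\phi f_!\isoto f_!\phi$ is an equivalence by \Cref{thm:II}), one has $\phi(g_\sharp\unit\twist{n})\simeq g_\sharp\unit\twist{n}$ in $D$. \Cref{thm:II} already yields the equivalences for $f^*$, $\otimes$, and $f_!$; it remains to verify the three non-automatic compatibilities on such generators.

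For $\phi f^!\to f^!\phi$, I would use \Cref{sta:!-preserve} to reduce locally on the source to the cases where $f$ is smooth or $f$ is a closed immersion. The smooth case follows from relative purity rewriting $f^!\simeq f^*\twist{T_f}$, a composite of operations $\phi$ already commutes with. The closed-immersion case is then reduced by the localization triangle $i_*i^!\to\id\to j_*j^*$ to the $f_*$-case of an open immersion. For $\phi f_*\to f_*\phi$, Nagata compactification factors $f$ as an open immersion followed by a proper morphism; the proper part is in \Cref{thm:II} via $\bar{f}_*=\bar{f}_!$, and the remaining open-immersion case is dispatched, in the ``good cases'', by Verdier duality on $\Cgm$, which lets one trade $j_*$ for $\DD j_!\DD$ and thus reduce to the $f_!$-case together with the self-duality of $\Cgm$. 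For $\phi\ihom\to\ihom\phi$, in the same ``good cases'' the generators of $\Cgm$ are dualizable (an Atiyah-type duality, cf.\ \Cref{exc:atiyah}), so $\ihom(M,-)\simeq M^\vee\otimes-$ for $M\in\Cgm$, and $\phi$ commutes with both tensor product and formation of duals.

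The main obstacle is not in the thick-subcategory reductions themselves but in the availability of the structure that the phrase ``good cases'' silently invokes: Atiyah-type dualizability of the generating $g_\sharp\unit\twist{n}$, a well-behaved Verdier duality on $\Cgm$, and its compatibility with $\phi$. These are precisely the conditions that make $\Cgm$ into a coefficient system in the preceding theorem, and verifying them in any given example---Beilinson motives, constructible $\ell$-adic or \'etale sheaves, mixed Hodge modules, \ldots---is the real work done in the cited literature.
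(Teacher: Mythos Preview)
The paper gives no explicit proof here; the corollary is presented as an immediate consequence of the preceding theorem (that in the ``good cases'' $\Cgm$ is itself a coefficient system) together with \Cref{sta:thm-small}. Your overall strategy---reduce to generators and treat each functor separately---is sound, but your handling of the three right-adjoint cases detours through duality in a way that introduces both an extra hypothesis and a circularity.

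Concretely: to trade $j_*$ for $\DD j_!\DD$ you need $\phi$ to commute with $\DD=\ihom(-,\omega)$, hence in particular $\phi\omega\simeq\omega$. But $\omega=p^!\unit$ for the structure morphism~$p$, so this is an instance of the very $f^!$-compatibility you are trying to establish---and your route to $f^!$ passes back through $j_*$. You acknowledge this by listing ``compatibility with $\phi$'' of Verdier duality among the inputs, but that is not part of the hypothesis of the corollary; it is part of its conclusion. Separately, your $\ihom$ argument assumes the generators $g_\sharp\unit\twist{n}$ are dualizable; Atiyah duality (\Cref{exc:atiyah}) only yields this when $g$ is smooth \emph{and proper}, which is not implied by ``$\Cgm$ is a coefficient system''.

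Both detours are avoidable. For $j_*$ with $j$ open, apply the localization triangle $i_!i^*\to\id\to j_*j^*$ to $N:=j_*M\in\Cgm(X)$ (this is exactly where the hypothesis that $\Cgm$ is closed under $j_*$ enters): since $\phi$ already commutes with $i_!$ and $i^*$, one reads off $\phi j_*M\simeq j_*\phi M$ directly. For $\ihom$, the \bref{ax:projection} gives $\ihom(g_\sharp\unit,-)\simeq g_*g^*(-)$, reducing to the $f_*$- and $f^*$-compatibilities on $\Cgm$ just obtained---no dualizability required. With these two fixes the remainder of your outline ($f_*$ via Nagata, $i^!$ via the other localization triangle, $f^!$ via local factorization and relative purity) goes through as written.
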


\begin{rmk}
This improves on \Cref{thm:II} in `good cases'.
\end{rmk}

\begin{rmk}
There is a more general notion of \emph{constructibility} for coefficient systems over~$B$.
Instead of the generating set $\{f_\sharp\one\twist{n}\}$ one may consider the set $\{f_\sharp p^*F\}$ where $F$ runs through a specified set of coefficients on the base~$B$, $f:Y\to X$ is smooth, and $p:Y\to B$ is the structure morphism.
We recover the geometric part by allowing only Tate twists as coefficients on~$B$.

The more general notion is useful in the study of duality phenomena, one of the topics of \Cref{sec:why} which wasn't addressed by the notion of coefficient systems alone.
We refer again to~\cite[\S\,3]{ayoub:betti} and~\cite[\S\,4.2]{cisinski-deglise:DM} for in-depth discussions.
\end{rmk}

\subsubsection{Miscellanea}
\label{sec:miscellanea}

Many other topics could be discussed in the framework of coefficient systems, for example:
\begin{enumerate}[(a)]
\item
We saw in \Cref{sta:cosy-cdh} that coefficient systems satisfy cdh-descent.
Some of them satisfy \textbf{descent} with respect to stronger topologies, however, such as \'etale descent (and therefore eh-descent) or h-descent~\cite[\S\,3]{cisinski-deglise:DM}.
This can be useful in extending coefficient systems from schemes to algebraic stacks via an atlas, say.
\item It makes sense to consider \textbf{linear} coefficient systems and scalar extension.
For example, in some cases being $\Q$-linear implies h-descent~\cite[\S\,3.3.d]{cisinski-deglise:DM}.
A general discussion of scalar extension can be found in~\cite[\S\,8]{drew:MHM}, and we will discuss one application of this technique in \Cref{sec:motiv-cosy}.
\item \textbf{Orientable} coefficient systems are somewhat simpler to work with in the sense that `all Thom twists are Tate twist' (see \Cref{exa:thom-ladic} and~\cite[\S\,2.4.c]{cisinski-deglise:DM}).
\end{enumerate}
In these and many other cases there should be corresponding initial objects (similarly to \Cref{sec:initial-object}).

Let us mention just two instances of possibly more surprising phenomena.
As remarked at the beginning of this section, clearly, a lot remains to be explored!
\begin{exa}
There is a functor
\[
\exp:\CoSy{B}\to\CoSy{B}
\]
which `exponentiates' a coefficient system, and that we study in forthcoming work~\cite{FGPL:exp-cs}.
When applied to $\DMB$ it produces a new coefficient system $\DM_{\mathcyr{B}}^{\textup{exp}}$ that should enhance Fres\'an-Jossen's theory of exponential motives~\cite{fresan-jossen:exponential-motives}.\footnote{More precisely, $\DM_{\mathcyr{B}}^{\textup{exp}}(k)$ bears to their theory the same relation as $\DMB(k)$ to Nori motives, for $k\subseteq\mathbb{C}$ a field.}
And when applied to mixed Hodge modules, it should produce an enhancement of Kontsevich-Soibelman's exponential mixed Hodge structures~\cite{Kontsevich-Soibelman:exponential-HS}.
An interesting aspect of this construction is that every exponentiated coefficient system comes with an additional `seventh' operation, the Fourier transform familiar from the $\ell$-adic theory as well as $\cD$-modules (see e.g.~\cite{MR823177,MR1081536}).

\end{exa}

\begin{exa}
\label{exa:weil-sheaves-construction}
Let $\FFq$ be a finite field and choose an algebraic closure~$\FF$.
If $C$ is a coefficient system on $\FF$-schemes, one can define a functor
\[
C^{\textup{W}}:\Sch[\FFq]{op}\to\miCatst
\]
by the formula, for any $\FFq$-scheme $X$,
\[
C^{\textup{W}}(X)=\lim 
\left(
  \begin{tikzcd}
    C(X\times_{\FFq}\FF)
    \ar[r, shift left=.3em, "{\textup{Fr}}"]
    \ar[r, shift right=.3em, "{\textup{id}}" swap]
    &
    C(X\times_{\FFq}\FF)
\end{tikzcd}
\right)
\]
where $\textup{Fr}$ denotes the $q$-Frobenius on~$X$ and the limit is taken in $\miCatst$.
The superscript is in honour of Weil since in the case of $\ell$-adic cohomology, the \icat $C^{\textup{W}}(X)$ can be seen as a derived category of Weil sheaves~\cite{hemo-richarz-scholbach:kunneth}.
With some work (see \Cref{exc:weil-sheaves} below) one shows that this underlies a functor
\[
(-)^{\textup{W}}:\CoSy{\Spec(\FF)}\to\CoSy{\Spec(\FFq)}.\footnote{This example was brought to my attention by Joshua Lieber.}
\]
\end{exa}

\begin{exc}
\label{exc:weil-sheaves}
The goal of this extended exercise is to prove that $C^{\textup{W}}$ of \Cref{exa:weil-sheaves-construction} is a coefficient system.
This can be done with the following steps:
\begin{enumerate}[(1)]
\item Let $\omega:C\to D$ be a natural transformation of functors $\Sch[B]{op}\to\miCatst$ and assume that:
\begin{enumerate}[({1}.1)]
\item $D$ is a coefficient system,
\item $C$ admits left adjoints $p_\sharp$ for smooth morphisms $p$, and $\omega$ commutes with them,
\item $\omega_X:C(X)\to D(X)$ is conservative for each $X\in\Sch[B]{op}$.
\end{enumerate}
Show that $C$ satisfies \bref{ax:hty}, \bref{ax:smooth-bc}, \bref{ax:projection} as well as \Cref{it:loc-0,it:loc-conservative} of \Cref{exc:localization-criteria}.
\item Assume in addition that $C$ satisfies \bref{ax:right} and that $\omega$ commutes with $f_*$ for all immersions~$f$.
Then $C$ also satisfies \bref{ax:loc} and \bref{ax:stable}, hence is a coefficient system.
\item Use the previous point to show that $C^{\textup{W}}$ of \Cref{exa:weil-sheaves-construction} is a coefficient system. \textit{Hint}: For any diagram $F:I\to\iCatst$ the canonical functor $\lim_I F\to\prod_{i\in I_0}F(i)$ is conservative.
\end{enumerate}
\end{exc}

\section{How?}
\label{sec:how}

The question alluded to in the title can be understood in at least two ways, both of which we want to address partially here:
\begin{enumerate}[(A)]
\item how to construct six-functor formalisms in general?
\label[question]{interpretation-1}
\item how to obtain six-functor formalisms from coefficient systems? that is, how to prove \Cref{thm:I}?
\label[question]{interpretation-2}
\end{enumerate}
The two are related.
Often the $\otimes$-structure and $*$-functoriality is produced without much effort and it is the $!$-functoriality that poses the most serious difficulties.
Below we will focus on the problem of constructing exceptional direct and inverse images, and we will refer to the literature for the problem of proving the expected properties.

\subsection{Exceptional functoriality for coefficient systems}
\label{sec:!-cs}
We start with \Cref{interpretation-2} and for this we want to follow the strategy employed by Deligne to produce exceptional functoriality in $\ell$-adic cohomology~\cite[\S\,XVII.3, 5.1]{MR0354654}.
As we will see, working in the generality we do, additional difficulties arise that need to be addressed.

\begin{rmk}
Let $f$ be a morphism of $B$-schemes.
Since $f$ is separated and of finite type (\Cref{conv:schemes}), we may use Nagata compactification to find a factorization
\begin{equation}
\label{eq:factorization-f}
\begin{tikzcd}
\ar[rr, "f"]
\ar[rd, "j" below]
&&
{}
\\
&
\ar[ru, "p" below]
\end{tikzcd}
\end{equation}
with $j$ an open immersion and $p$ a proper morphism.
We would then like to set
\[
f_!:=p_*j_\sharp
\]
but this definition poses several difficulties:
\begin{enumerate}[(1)]
\item \emph{well-definedness}: is it `independent' of the factorization?
\item \emph{right-adjoint}: why is there a right adjoint~$f^!$?
\item \emph{functoriality}: in what sense is it functorial in~$f$?
\end{enumerate}
\end{rmk}
We will address each of these difficulties in turn.
\subsubsection{Well-definedness}
\label{sec:well-definedness}
Consider the category $\Comp(f)$ of compactifications of~$f$: Its objects are factorizations as in~\eqref{eq:factorization-f}, with morphisms (necessarily proper) making the obvious diagram commute:
\[
\begin{tikzcd}
&\ar[dd, "r"]
\ar[rd, "p_1"]
\\
\ar[ru, "j_1"]
\ar[rd, "j_2" {yshift=-.8ex, below, pos=0.3}]
&&
{}
\\
&
\ar[ru, "p_2" {yshift=-.8ex, below, pos=0.7}]
\end{tikzcd}
\]
The category~$\Comp(f)$ is easily seen to be cofiltered.
In comparing $(p_1)_*(j_1)_\sharp$ with $(p_2)_*(j_2)_\sharp$ we may therefore assume a morphism~$r$ as above.
We then find
\[
(p_1)_*(j_1)_\sharp\simeq (p_2)_*r_*(j_1)_\sharp\stackrel{!}{\simeq}(p_2)_*(j_2)_\sharp
\]
where the last identification would follow if we could `commute' $j_\sharp$'s with $p_*$'s.
This is known as the \emph{support property}:
\begin{description}
\item[\namedlabel{supp}{(Support)}] Given a Cartesian square
\[
\begin{tikzcd}
\ar[r, "j_1"]
\ar[d, "r_2" left]
&
\ar[d, "r_1"]
\\
\ar[r, "j_2" below]
&
{}
\end{tikzcd}
\]
with $r_1$ proper and $j_2$ an open immersion,
the induced transformation $(j_2)_\sharp (r_2)_*\to (r_1)_*(j_1)_\sharp$ is an equivalence.
\end{description}

\begin{exc}
\label{exc:pbc-supp}
Show that \bref{pbc}$\ \Rightarrow$\ \bref{supp}.
\end{exc}

\begin{rmk}
In view of \Cref{exc:pbc-supp} it is natural to try to establish \bref{pbc}.
We now sketch the main ideas that go into deducing it from the axioms of a coefficient sytem.
The same strategy will also be employed in \Cref{sec:right-adjoint}.
\begin{enumerate}[(1)]
\item Recall that we are given~(\ref{eq:square}) with $f$ proper, and would like to show the transformation $g^*f_*\to k_*h^*$ from~(\ref{eq:BC}) to be an equivalence.
By \bref{ax:loc} and Chow's lemma we reduce to $f$ projective, $f=pi$ where $i$ is a closed immersion and $p:\mathbb{P}^d_Y\to Y$ is the canonical projection.
This reduction step is written out in detail in~\cite[4.1.1.(1)]{AGV}.
\item The case of~$i_*$ (`closed base change') follows easily from \bref{ax:loc} so we further reduce to the case of~$p_*$.
\item Hence, in addition to being projective, $p$ is also smooth of relative dimension~$d$ so we expect to observe Atiyah duality~(\Cref{exc:atiyah}).
In other words, one ought to be able to show a canonical equivalence
\begin{equation}
\label{eq:atiyah-duality-Pd}
p_*\simeq p_\sharp\twist{-T_p}.
\end{equation}
Although this is a rather explicit problem, the proof is long and involved.
Moreover, constructing a candidate for the equivalence also involves proving some form of purity.
We refer to \cite[Th\'eor\'eme~1.7.9]{ayoub07-thesis-1} for details. 
\item
Having the equivalence~(\ref{eq:atiyah-duality-Pd}) at our hand, we are reduced to show that both $p_\sharp$ and $\twist{T_p}$ `commute with inverse images'.
This is exactly \bref{ax:smooth-bc} and closed base change (recall \Cref{rmk:thom-construction}).
\end{enumerate}
\end{rmk}
\subsubsection{Right-adjoint}
\label{sec:right-adjoint}
It is clear that the functor $j_\sharp$ admits a right adjoint, namely~$j^*$.
To show that $p_*$ does as well (for $p$ proper) we will use the adjoint functor theorem for presentable \icats.
In other words we will show that $p_*$ preserves colimits.
The advantage of this formulation of the problem is that it becomes amenable to the same attack as the one employed in proving \bref{pbc} above:
One reduces to projective and further to smooth projective morphisms and then obtains the identification~(\ref{eq:atiyah-duality-Pd}).
Both of the functors on the right are left adjoints and we conclude.

\subsubsection{Functoriality}
\label{sec:functoriality}
Well-definedness discussed in \Cref{sec:well-definedness} is only one aspect of the problem that is posed by functoriality.
Recall that we want to construct a functor $C_!:\Sch[B]{}\to\PrLst$.
Deligne was able to achieve this at the level of triangulated categories by setting, for $f:X\to Y$,
\begin{equation}
\label{eq:comp-filtered-colim}
f_!:=\varinjlim_{(p,j)\in\Comp(f)^{\textup{op}}}p_*j_\sharp,
\end{equation}
using that $\Comp(f)$ is cofiltered and the functor $*\circ\sharp:\Comp(f)^{\textup{op}}\to\Hom(C(X),C(Y))$ sends morphisms to isomorphisms, by \bref{supp}.
But even constructing such a functor $*\circ\sharp$ is a daunting task in the context of \icats as it would involve providing, in addition to the homotopies of \bref{supp}, homotopies between these and so on \textsl{ad infinitum}.

\begin{rmk}
One solution to this homotopy theoretic problem was developed in~\cite{liu-zheng:glueing}, based on multisimplicial sets.
It is very general but unfortunately rather complicated.
We would like to describe a more elementary solution specific to the given problem.
It is based on our recent collaboration with Ayoub and Vezzani~\cite{AGV}.
\end{rmk}

\begin{rmk}
\label{rmk:!-cns-idea}
The basic idea is very simple.
Let $f:X\to Y$ be a morphism of $B$-schemes which admits a compactification~$\bar{f}$:
\begin{equation}
\label{eq:compactification-f}
\begin{tikzcd}
X
\ar[d, "j"]
\ar[r, "f"]
&
Y
\ar[d, "k"]
\\
\bar{X}
\ar[r, "\bar{f}"]
&
\bar{Y}
\end{tikzcd}
\end{equation}
Here, the square commutes, $j$ and $k$ are open immersions and $\bar{X}$ and $\bar{Y}$ are proper $B$-schemes.
We obtain a diagram of solid arrows
\[
\begin{tikzcd}
C(X)
\ar[d, "j_\sharp"]
\ar[r, dotted, "f_!"]
&
C(Y)
\ar[d, "k_\sharp"]
\\
C(\bar{X})
\ar[r, "\bar{f}_*"]
&
C(\bar{Y})
\end{tikzcd}
\]
and as $\bar{f}$ is proper we would like to define $f_!$ so that the square `commutes'.
Again, this is not a tenable strategy in the context of \icats.
Being commutative is not a property but a structure and we are back to the exact same issue as before.

However, we can avoid this issue with the following trick.
Define a full \subicat $C(X,\bar{X})_!$ of $C(\bar{X})$ as the essential image of~$j_\sharp$, and similarly for~$k$.
(In particular, we have an equivalence $C(X)\simeq C(X,\bar{X})_!$.)
\bref{supp} implies that $\bar{f}_*$ restricts to a morphism $C(X,\bar{X})_!\to C(Y,\bar{Y})_!$.
The gain is that the functor $\bar{f}_*$ is already part of a functor $C_*:\Sch[B]{}\to\PrR$ which encodes these higher homotopies.

After outlining the basic idea we can now summarize the construction of the functor~$C_!$.
\end{rmk}

\begin{cns}
We will use the following diagram
\[
\begin{tikzcd}
&
\Comp[B]
\ar[ld, "\omega" {swap, yshift=.3ex}]
\ar[rd, "\pi"]
\\
\Sch[B]{}
&&
\Sch[B]{\textup{prop}}
\end{tikzcd}
\]
in which $\Comp[B]$ denotes the category whose objects are pairs $(X,\bar{X})$ as above and whose morphisms are pairs $(f,\bar{f})$ as in~(\ref{eq:compactification-f}).
Forgetting $\bar{X}$ \resp{$X$} defines the functor $\omega$ \resp{$\pi$}.
(Here, $\Sch[B]{\textup{prop}}$ denotes the category of $B$-schemes and proper morphisms.)

Starting with $C$ and passing to $C_*$ as above we obtain the functor $C_*\circ \pi:\Comp[B]\to\PrR$ that informally can be described as follows:
\begin{align*}
  (X,\bar{X})&\longmapsto C(\bar{X})\\
  (f,\bar{f})&\longmapsto \bar{f}_*
\end{align*}
In fact, this functor takes values in $\PrL$ as well, by \Cref{sec:right-adjoint}.

If $C(-,-)_!:\Comp[B]\to\PrL$ denotes the full subfunctor of $C_*\circ \pi$ considered in \Cref{rmk:!-cns-idea} then we define
\[
C_!:=\mathrm{LKE}_\omega C(-,-)_!:\Sch[B]{}\to\PrL,
\]
the left Kan extension along~$\omega$.
This last step thus removes the dependency of the factorization in a similar way as in~(\ref{eq:comp-filtered-colim}).
\end{cns}

\begin{rmk}
It is not difficult to prove that $C_!(f)$ recovers $p_*j_\sharp$ up to homotopy for any factorization as in~(\ref{eq:factorization-f}).
For details, we refer to~\cite[\S\,4.3]{AGV}.
From there, one can go on and prove the expected properties of this six-functor formalism, see~\cite{ayoub07-thesis-1} or~\cite{cisinski-deglise:DM}.
\end{rmk}

\subsection{Motivic coefficient systems}
\label{sec:motiv-cosy}

Once one has \Cref{thm:I} at one's disposal, of course, \cref{interpretation-1} becomes: how to construct coefficient systems? In this brief section we will describe an elegant and powerful procedure that has been employed in the literature to produce `motivic' coefficient systems.
This topic would have just as well fit in with \Cref{sec:cosy-structure}.

\begin{rmk}
\label{rmk:represent-cohomology}
Let $C\in\CoSyPr{B}$ be a coefficient system, say presentable to fix our ideas.
As we saw in \Cref{sec:initial-object}, there is an essentially unique morphism $\rho_C^*:\SH\to C$ from the initial object which can be viewed as the (homological) $C$-realization: For any $B$-scheme~$X$, the functor $\rho^*_C(X):\SH(X)\to C(X)$ sends a smooth $X$-scheme~$Y$ to its homology coefficient $\Hm_\bullet(Y)$ in $C(X)$.
This functor admits a right adjoint $\rho_*^C(X):C(X)\to \SH(X)$ that has a canonical lax symmetric monoidal structure (since $\rho^*_C(X)$ underlies a symmetric monoidal functor).
In particular we see that $\rho_*^C(B)\unit\in\calg{\SH(B)}$ is a motivic ring spectrum which we denote by~$\cC$.

This object represents $C$-cohomology in the sense that for any smooth $B$-scheme~$X$, we have by adjunction
\[
\pi_0\map[\SH(B)]{X}{\cC(m)[n]}\simeq\pi_0\map[C(B)]{\Hm_\bullet(X)}{\unit(m)[n]}\simeq\Hm^{n}(X;\unit(m)).
\]
The observation we want to make now is that \emph{every} motivic ring spectrum represents some cohomology theory.
\end{rmk}

\begin{notn}
\label{cns:motivic-cosy}
Let $\cA\in\calg{\SH(B)}$ be a motivic ring spectrum and denote (abusively) by $\cA_X:=f^*\cA\in\calg{\SH(X)}$ its pull-back to any $B$-scheme $f:X\to B$.
The association $X\mapsto \Mod[\cA_X]{\SH(X)}=:\SH(X;\cA)$ underlies a functor
\begin{equation}
\label{eq:modules-cosy}
\SH(-;\cA):\Sch[B]{op}\to\mPrst
\end{equation}
that - in anticipation of the next \namecref{sta:modules-cosy} - we call the \emph{motivic coefficient system represented by~$\cA$}.
\end{notn}

\begin{thrm}
\label{sta:modules-cosy}
The functor $\SH(-;\cA)$ of~(\ref{eq:modules-cosy}) is a presentable coefficient system and the canonical `free functor' $\rho^*_{\cA}:\SH\to\SH(-;\cA)$ is a morphism of presentable coefficient systems.
\end{thrm}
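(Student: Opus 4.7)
The plan is to transport all required structure from $\SH$ to $\SH(-;\cA)$ via a change-of-scalars construction. The starting point is the base-change equivalence
\[
\SH(X;\cA)\simeq\SH(X)\otimes_{\SH(B)}\Mod[\cA]{\SH(B)}
\]
in $\Mod[\SH(B)]{\mPrst}$, valid for each $f:X\to B$ (by the formula $\cA_X=f^*\cA$). Since the construction of \Cref{sec:initial-object} presents $\SH$ as a functor $\Sch[B]{\textup{op}}\to\Mod[\SH(B)]{\mPrst}$, tensoring levelwise with the fixed $\SH(B)$-algebra $\Mod[\cA]{\SH(B)}$ will upgrade $\SH(-;\cA)$ to a functor into $\mPrst$, and the unit map $\SH(B)\to\Mod[\cA]{\SH(B)}$ will induce the natural transformation $\rho^*_{\cA}$.

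With this setup I would verify each axiom by reducing it to its counterpart for $\SH$. Axiom \bref{ax:right} is automatic: each $\SH(X;\cA)$ is presentably symmetric monoidal, hence closed, and $f_*$ exists as the right adjoint to the cocontinuous $f^*$. For \bref{ax:left} the crucial observation is that the smooth projection formula for $\SH$ is precisely the assertion that $p_\sharp^{\SH}$ is $\SH(X)$-linear (viewing $\SH(Y)$ as an $\SH(X)$-module via $p^*$); therefore the adjunction $p_\sharp\dashv p^*$ persists under tensoring over $\SH(X)$ with $\Mod[\cA_X]{\SH(X)}$, and tensoring the Beck-Chevalley equivalence for $\SH$ produces \bref{ax:smooth-bc} and \bref{ax:projection} for $\SH(-;\cA)$. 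The axioms \bref{ax:hty} and \bref{ax:stable} transport for the same reason, the relevant endofunctors being obtained by base change from those on $\SH$. Finally, $\rho^*_{\cA}$ commutes with $p_\sharp$ for $p$ smooth by a single application of the smooth projection formula: $\cA_X\otimes p_\sharp(E)\simeq p_\sharp(p^*\cA_X\otimes E)=p_\sharp(\cA_Y\otimes E)$.

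The only axiom that is not entirely formal is \bref{ax:loc}. Here I would invoke \Cref{exc:localization-criteria} to reduce it to three conditions: (1) $\SH(\emptyset;\cA)=0$, (2) fully faithfulness of $i_*$ for closed immersions $i$, and (3) joint conservativity of $(i^*,j^*)$. Item (1) follows from $\SH(\emptyset)=0$ after tensoring. For (2) and (3) the key tool is the conservative forgetful functor $U_X:\SH(X;\cA)\to\SH(X)$, which commutes with $i^*$ and $j^*$ by construction and with $i_*$ by a standard mate argument applied to the free/forgetful adjunction (using that $i^*$ is symmetric monoidal, so that the free functors commute with $i^*$ up to canonical equivalence). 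Conservativity of $U_X$ then reduces both (2) and (3) to the corresponding statements for $\SH$.

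The principal technical obstacle is the first step: packaging the change-of-coefficients construction coherently as a functor valued in $\mPrst$, together with all the $\sharp$-adjointability and Beck-Chevalley data needed in the subsequent verifications. This rests on the bifunctoriality of $\Mod_{(-)}(-)$ and on the theory of relative tensor products in $\Mod[\SH(B)]{\mPrst}$ developed in~\cite{Lurie_higher-algebra}. Once this foundational bookkeeping is in place, the remaining checks are essentially formal; compare the treatment of Beilinson motives in~\cite{cisinski-deglise:DM}.
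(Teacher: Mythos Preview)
Your proposal is correct. The paper itself does not give a proof but simply refers to \cite[Theorem~8.10]{drew:MHM} and \cite[\S\,7.2, 17.1]{cisinski-deglise:DM}, so there is no in-text argument to compare against. Your change-of-scalars strategy---realizing $\SH(-;\cA)$ as $\SH(-)\otimes_{\SH(B)}\Mod[\cA]{\SH(B)}$ in $\mPrst$ and transporting \bref{ax:left}, \bref{ax:hty}, \bref{ax:stable} formally via the $\SH(X)$-linearity of $p_\sharp$ (that is, the projection formula), while handling \bref{ax:loc} through the conservative forgetful functor and \Cref{exc:localization-criteria}---is essentially the approach taken in the cited references, particularly Drew's. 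The caution you raise at the end is the right one: the only genuine work lies in making the module-category construction and its adjunctions coherent at the \icategorical level, for which the machinery of~\cite{Lurie_higher-algebra} suffices.
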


A proof of this result can be found in~\cite[Theorem~8.10]{drew:MHM}, see also~\cite[\S\,7.2, 17.1]{cisinski-deglise:DM}.

\begin{rmk}
We may now combine the constructions of \Cref{rmk:represent-cohomology} and \Cref{cns:motivic-cosy}.
That is, in the situation of \Cref{rmk:represent-cohomology} we obtain a factorization of $\rho_C^*:\SH\to C$ through the \emph{motivic coefficient system associated with $C$}:
\[
\SH\xto{\rho^*_{\cC}}\SH(-;\cC)\xto{\tilde{\rho}_C^*}C
\]
The induced functor $\tilde{\rho}^*_C$ factors further through the localizing subfunctor $\tilde{C}$ of $C$ generated by the part of geometric origin (\Cref{sec:constructibility}).
By a tilting argument, the resulting morphism
\[
\tilde{\rho}_C^*:\SH(-;\cC)\to\tilde{C}
\]
is in fact an equivalence in some cases of interest, see~\cite[Theorem~17.1.5]{cisinski-deglise:DM}.
\end{rmk}

\begin{rmk}
  In summary, we have procedures
\[
\begin{tikzcd}
\calg{\SH(B)}
\ar[r, "{\SH(-;-)}", bend left=30]
&
\CoSyPr{B}
\ar[l, "{\rho_*(B)\unit}", bend left=30]
\end{tikzcd}
\]
which can be upgraded to functors.
We could ask whether these describe a colocalization, that is, whether they form an adjunction in which the left adjoint~$\SH(-;-)$ is fully faithful.
\end{rmk}

\begin{exa}[{\cite[17.1.7]{cisinski-deglise:DM}}]
\label{exa:betti-modules}
Consider the Betti realization functor
\[
\rho^*_{\textup{B}}:\SH(\Spec(\CC))\to\D(\Q)
\]
that sends a smooth complex scheme $X$ to the rational singular chain complex $\mathrm{Sing}(X^{\textup{an}})\otimes\Q$ on the underlying complex analytic space.
It is naturally symmetric monoidal - in fact, it is part of a morphism of coefficient systems on complex schemes~\cite{ayoub:betti}:
\[
\SH\to\D((-)^{\textup{an}};\Q)
\]
The associated motivic ring spectrum $\cB:=\rho^B_*\Q\in\calg{\SH(\CC)}$ is the \emph{(rational) Betti spectrum} that represents Betti cohomology.
We will now describe the resulting coefficient system $\SH(-;\cB)$ more explicitly.

First, observe that for a general complex scheme~$X$, the functor
\[
\tilde{\rho}_{\textup{B}}^*(X):\SH(X;\cB)\to\D(X^{\textup{an}};\Q)
\]
is far from an equivalence.
Instead, it factors through
\[
\SH(X;\cB)\to\Ind(\Dbc(X;\Q))\to\D(X^{\textup{an}};\Q)
\]
where the second arrow is the colimit-preserving functor extending the identity on $\Dbc(X;\Q)$.
The first functor in this factorization is in fact fully faithful, and the image is generated under colimits, desuspensions and truncations (with respect to the canonical t-structure) by sheaves of the form $f_*\Q$, where $f:Y\to X$ is proper, see~\cite[Theorem~1.93]{ayoub-anabel}. 
\end{exa}

\begin{exa}[{\cite{drew:MHM}}]
\label{exa:hodge-modules}
Saito's derived categories of mixed Hodge modules do not, in an obvious way, admit an enhancement to a coefficient system.
(As a result, the Hodge realization functors are not known to commute with the six functors on compact objects.)
On the other hand, there is a Hodge realization functor
\[
\rho_{\textup{H}}^*:\SH(\Spec(\CC))\to\D(\Ind(\mathrm{MHS}^{\textup{p}}_\Q))
\]
with values in the derived \icat of Ind-completed polarizable mixed Hodge structures over~$\Q$.
The associated motivic ring spectrum $\cH:=\rho^{\textup{H}}_*\Q(0)$ is the \emph{absolute Hodge spectrum} that represents absolute Hodge cohomology.
Drew calls the resulting coefficient system $\SH(-;\cH)$ \emph{motivic Hodge modules}, and they satisfy many of the properties expected of a coefficient system that should capture mixed Hodge modules of geometric origin.
In line with this, he conjectures that for each complex scheme~$X$, the triangulated category of compact objects in $\ho(\SH(X;\cH))$ embeds fully faithfully into Saito's $\Db(\MHM(X))$.
\end{exa}

\begin{exa}[{\cite{MR3865569}}]
As in \Cref{exa:hodge-modules}, until recently there was no known enhancement of Voevodsky's category of motives over a field, $\DMgm(\Spec(k);\Z)$ to a coefficient system in mixed characteristic.
(The situation was better understood with rational coefficients and/or in equal characteristic.)
Spitzweck constructs a motivic ring spectrum $\cM\in\SH(\Spec(\Z))$ that represents Bloch-Levine motivic cohomology and then defines
\[
\SH(-;\cM):\Sch[]{op}\to\mPrst
\]
that can be seen as a coefficient system of integral motivic sheaves.
He proves that over a field~$k$ the compact part of $\SH(\Spec(k);\cM)$ is equivalent to $\DMgm(\Spec(k);\Z)$,
while with rational coefficients and for any scheme~$X$ one recovers Beilinson motives:
\[
\SH(X;\cM\otimes\Q)\simeq\DMB(X)
\]

\end{exa}
\subsection{Exceptional functoriality for \texorpdfstring{$\RigSH$}{RigSH}}
\label{sec:except-funct-rigsh}

We have two goals for this last section.
First, we want to say something regarding \Cref{interpretation-1} at the beginning of \Cref{sec:how}.
And secondly, we want to give an example of a six-functor formalism outside the world of schemes (and topological spaces) that have dominated the discussion so far.
\begin{rmk}
In the context of schemes, \Cref{thm:I} provides a very useful criterion for recognizing six-functor formalisms.
In contexts that are not too different from schemes one can hope to establish a similar criterion, see e.g.~\cite{khan-ravi:sh-stacks} for (certain) algebraic stacks.
However, in general one shouldn't expect the axioms of coefficient systems---even if interpreted appropriately---to be sufficient to guarantee the existence of $!$-functoriality.

Rigid (or `non-archimedean') analytic geometry is arguably an example of a theory that is too different for a successful transfer.
In the following pages we want to describe how a different kind of transfer allows one to construct $!$-functors (and prove the expected properties) on the `universal' rigid-analytic theory, namely rigid-analytic stable motivic homotopy theory~$\RigSH$.
This is a report on the work with Ayoub and Vezzani~\cite{AGV} already mentioned in \Cref{sec:functoriality}.
\end{rmk}

\begin{rmk}
Rigid-analytic geometry is the analogue of complex-analytic geometry over non-archimedean fields, e.g.\ $p$-adic fields.
The theory retains both algebraic and analytic aspects, and it has found many applications in arithmetic algebraic geometry, particularly in the wake of Scholze's work on perfectoid spaces and $p$-adic geometry.

Rigid-analytic spaces in the sense this term is used in~\cite{AGV} form a category $\RigSpc[]{}$ that encompasses both Tate's rigid-analytic varieties (and Berkovich spaces) as well as a large class of adic spaces (e.g.~all `stably uniform' ones~\cite{MR3824781}) in the sense of Huber.
While ridding the treatment of unnecessary Noetherianity assumptions was a goal of~\cite{AGV}, these technical details will not concern us in this short outline.
\end{rmk}

\begin{rmk}
The construction of $\RigSH$ is originally due to Ayoub~\cite{ayoub:rigid-motives} and modeled on Morel-Voevodsky's construction of~$\SH$ (see \Cref{sec:initial-object}).
In fact, the two are entirely parallel according to the following `dictionary':
\[
\begin{tikzcd}[row sep=tiny]
\Sch{}
\ar[r, squiggly, leftrightarrow]
&
\RigSpc{}
\\
\aone
\ar[r, squiggly, leftrightarrow]
&
\mathbb{B}^1
\\
\mathbb{G}_{\textup{m}}
\ar[r, squiggly, leftrightarrow]
&
\mathbb{T}
\end{tikzcd}
\]
Here $\mathbb{B}^1$ is the closed unit ball and $\mathbb{T}\subset\mathbb{B}^1$ the annulus.

Unsurprisingly and in a completely parallel fashion, $\RigSH$ comes with a closed symmetric monoidal structure and $*$-functoriality.
However, there is no analogue of \Cref{thm:I} available, and Ayoub was able to construct the $!$-functoriality only for morphisms that arise as the analytification of algebraic morphisms (that is, those coming from $\Sch{}$).\footnote{In fact, he obtained this as an application of a version of \Cref{thm:I}.}
The original goal of~\cite{AGV} was to remedy this.
\end{rmk}

\begin{rmk}
Let us explain why an analogue of \Cref{thm:I} is not available and in fact might not be expected.
Indeed, in following the strategy of \Cref{sec:!-cs} one encounters the following problems in the rigid-analytic world:
\begin{enumerate}[(a)]
\item The analogue of \Cref{exc:pbc-supp} does not hold (apriori), that is, proper base change does not imply the support property.
The underlying reason is that while \bref{ax:loc} holds for $\RigSH$, it is of limited use since the complement of an open immersion is not typically a rigid-analytic space.
\item At several places in \Cref{sec:!-cs} we used Chow's lemma to reduce questions about proper morphisms to projective ones.
However, an analogue of Chow's lemma is not available in rigid-analytic geometry, thus making this strategy infeasible.
\end{enumerate}

\begin{rmk}
On the other hand, morphisms locally of finite type between rigid-analytic spaces are still \emph{weakly} compactifiable, at least locally.
More precisely, every $f:X\to Y$ locally of finite type is, locally on~$X$, the composition of a locally closed immersion followed by a proper morphism.
Therefore, once one knows \bref{supp} and the existence of right adjoints to proper push-forwards one can then follow essentially the same strategy in constructing the exceptional functoriality as in \Cref{sec:functoriality}.
The existence of the required right adjoints follows easily from the fact that the \icats $\RigSH(X)$ are compactly generated and that the inverse image functors along proper morphisms preserve compact objects.
\end{rmk}
In the remainder of this section we will sketch how to prove \bref{supp}.
\end{rmk}

\begin{rmk}
\label{rmk:raynaud}
The proof still employs a transfer from algebraic to rigid-analytic geometry albeit in a very different way.
It is based on Raynaud's approach to rigid-analytic geometry that can be roughly described by the following picture:
\begin{equation}
\label{eq:raynaud}
\begin{tikzcd}
&\FSch{}
\ar[ld, "\sigma" {swap,yshift=.3ex}]
\ar[rd, "\rho"]
\\
\Sch{}
&&
\RigSpc{}
\end{tikzcd}
\end{equation}
Here, formal schemes sit at the top and admit two functors: the `special fiber' that associates to $\cX$ its underlying topological space with the reduced scheme structure $\sigma(X)$, and the 'generic fiber' $\rho$ that is a categorical localization.
More precisely, the category $\RigSpc{}$ is, as a first approximation, the localization of $\FSch{}$ with respect to so-called `admissible blow-ups': blow-ups with center `contained in the special fiber'.
This approximation becomes correct if one imposes finiteness conditions on the formal schemes involved (adic with finitely-generated ideals of definition) and if one allows rigid-analytic spaces to be glued along open immersions.
\end{rmk}

\begin{rmk}
Passing to stable motivic homotopy theory in the three contexts in parallel gives rise to a roof like so,
\[
\begin{tikzcd}
&\FSH
\ar[ld, "\sigma^*" above, shift right=.3em, name=L]
\ar[rd, "\rho^*", shift left=.3em]
\\
\SH
\ar[ru, "\sigma_*" below, shift right=.3em, name=R, "\sim" above]
\ar[rr, "\rho^*" above, dotted, shift left=.3em]
&&
\RigSH
\ar[ul, "\rho_*" below, shift left=.3em]
\ar[ll, "\rho_*", shift left=.3em, dotted]
\end{tikzcd}
\]
where the components of the natural transformations $(-)^*$ are symmetric monoidal functors with right adjoints $(-)_*$, and where $\sigma^*\dashv\sigma_*$ is an adjoint equivalence, by Localization for~$\FSH$.
We continue to denote by $\rho^*$ \resp{$\rho_*$} the functors at the bottom that make the triangle commute.

The basic idea in proving \bref{supp} for $\RigSH$ is to apply $\rho_*$ to the morphism $(j_2)_\sharp(r_2)_*\to(r_1)_*(j_1)_\sharp$ and use \bref{supp} for $\SH$ to show it is an equivalence.
This requires two inputs:
\begin{enumerate}[(a)]
\item The functor $\rho_*$ needs to be sufficiently conservative.
While it isn't on the nose, it is still true (and easy to prove) that the family
\[
\left(\RigSH(S)\xto{f^*}\RigSH(X)\xto{\rho_*^{\cX}}\SH(\sigma(\cX))\right)_{f,\cX}
\]
is jointly conservative, where $f:X\to S$ runs through smooth morphisms of rigid-analytic spaces and $\cX$ is a chosen formal model of~$X$.
\item It is clear that $f^*$ \resp{$\rho_*$} commutes with the $(j_i)_\sharp$ and the $(p_i)_*$ \resp{with the $(p_i)_*$} so it remains to prove that $\rho_*$ commutes with the $(j_i)_\sharp$.
\end{enumerate}
This last point turns out to be quite involved and required a systematic study of $\RigSH$.
We will not go into the details here and refer to~\cite[Theorem~4.1.3]{AGV} instead.
On the other hand, this systematic study leads to other results of independent interest which we do want to mention.
\end{rmk}

\begin{thrm}[{\cite[Theorem~3.3.3]{AGV}}]
\label{thm:agv-main}
\begin{enumerate}
\item The components of the natural transformation
$\SH(\sigma(-),\rho_*\one)\to\RigSH(\rho(-))$ are fully faithful.
\item The natural transformation $\SH^{\textup{\'et}}(\sigma(-),\rho_*\Q)\to\RigSH^{\textup{\'et}}(\rho(-),\Q)$ exhibits the latter as the rig-\'etale sheafification of the former.
\end{enumerate}
\end{thrm}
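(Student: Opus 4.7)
The plan is to prove both parts by reducing to checks on compact generators and lifting the resulting computations to formal schemes via Raynaud's picture (\Cref{rmk:raynaud}). Fix $\cX \in \FSch{}$ and set $X := \sigma(\cX)$, $X^{\textup{rig}} := \rho(\cX)$, and $A := \rho_*\one \in \calg{\SH(X)}$. The comparison under consideration is the essentially unique colimit-preserving symmetric monoidal lift $\tilde\rho^*:\SH(X;A) \to \RigSH(X^{\textup{rig}})$ of $\rho^*:\SH(X) \to \RigSH(X^{\textup{rig}})$ to $A$-modules; its right adjoint $\tilde\rho_*$ sends $N$ to $\rho_* N$ equipped with its canonical $A$-module structure.

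For part~(1), since $\tilde\rho^*$ is colimit-preserving, fully faithfulness can be tested on the free modules $A \otimes M$ for $M$ in a generating set of $\SH(X)$. On such a free module the unit of the adjunction $\tilde\rho^* \dashv \tilde\rho_*$ is precisely the projection-formula map
\[
A \otimes M \;=\; \rho_*\one \otimes M \;\longrightarrow\; \rho_*\rho^* M,
\]
so the content of (1) is the projection formula for $\rho_*$. A further colimit argument reduces the problem to $M = f_\sharp\one\twist{n}$ for $f:Y\to X$ smooth. At this point I would invoke Raynaud's theorem: every smooth morphism of rigid-analytic spaces is, Zariski-locally and after an admissible blow-up, the rigid generic fibre of a smooth morphism of formal schemes. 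This lifting, combined with \bref{ax:smooth-bc}, \bref{ax:projection}, and the compatibility of $\rho^*$ and $\sigma^*$ with $\sharp$-push-forwards, reduces the desired equivalence to the tautological case $M = \one$.

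For part~(2), part~(1) already shows that the comparison is fully faithful prior to any sheafification. The role of the rig-étale sheafification is to enforce essential surjectivity: $\RigSH^{\textup{\'et}}(X^{\textup{rig}};\Q)$ is generated under colimits by objects of the form $g_\sharp\Q\twist{m}$ for smooth $g$, and by Raynaud every such $g$ admits a formal model after a rig-étale cover and an admissible blow-up, bringing it into the essential image of the sheafified presheaf. The use of $\Q$-coefficients and the étale topology is essential here: it provides the exactness properties (cf.~\cite[\S\,3.3.d]{cisinski-deglise:DM}) needed for rig-étale sheafification to commute with the $\rho_*\Q$-module construction, so that the factored map from the sheafification to $\RigSH^{\textup{\'et}}(\rho(-);\Q)$ is a stalkwise equivalence between rig-étale sheaves, hence an equivalence.

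The principal obstacle is the projection formula for $\rho_*$ against smooth morphisms in the generality of arbitrary (not necessarily Noetherian) rigid-analytic spaces: this is the technically difficult step carried out in \cite[Theorem~4.1.3]{AGV}. It rests on the joint conservativity of $\rho_*^{(-)} \circ f^*$ noted earlier in the text (with $f$ running over smooth morphisms and formal models chosen for the targets), and on a careful analysis of how admissible blow-ups interact with $\sharp$-functoriality on $\FSH$ and $\RigSH$.
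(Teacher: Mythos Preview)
The paper does not contain a proof of this theorem; it is cited as \cite[Theorem~3.3.3]{AGV}, and the surrounding text only describes context and consequences. There is therefore no in-paper proof to compare your proposal against.

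As to the sketch itself: the reduction of part~(1) to a projection formula for $\rho_*$ on free modules is a reasonable reformulation, and the overall shape of part~(2) (fully faithful plus generation after rig-\'etale localization) is plausible. But one step is too optimistic. Your appeal to ``Raynaud's theorem'' asserting that a smooth morphism of rigid-analytic spaces lifts, Zariski-locally and after an admissible blow-up, to a \emph{smooth} morphism of formal schemes is not correct in general: what one obtains is only a rig-smooth formal model, and bridging the gap between rig-smooth and genuinely smooth is part of the ``systematic study'' the paper alludes to rather than a black box you may invoke. Note also that \cite[Theorem~4.1.3]{AGV}, as described in the present text, is the statement that $\rho_*$ commutes with $j_\sharp$ for open immersions~$j$ (the input needed for \bref{supp}); this is related to, but not the same as, the projection formula for $\rho_*$ against arbitrary smooth $\sharp$-push-forwards that your argument requires.
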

Here, the natural transformations in the statement are between $\PrLst$-valued functors on $\RigSpc{\textup{op}}$ (viewed as having the same objects as $\FSch{}$, see \Cref{rmk:raynaud}).
The notation $\SH(X,A)$ is a shorthand for the \icat of $A$-modules, $A$ being a commutative algebra object in~$\SH(X)$, as in \Cref{sec:motiv-cosy}.
The first part of \Cref{thm:agv-main} can be read as saying that a whole chunk of $\RigSH$ (one might call it the part of good reduction) admits a completely algebraic description.
In fact, in good cases the commutative algebra $\rho_*\one$ can be computed.
For example, over the $p$-adic integers $\rho_*^{\Z_p}\one\simeq\Hm^\bullet(\mathbb{G}_{\textup{m}})$ and we deduce that
\[
\SH^{\textup{up}}(\FF_p)\isoto\RigSH^{\textup{gr}}(\Q_p)
\]
where the domain denotes the \emph{unipotent motives}, that is, the localizing \subicat of $\SH(\mathbb{G}_{\textup{m},\FF_p})$ generated by the constant motives.

Finally, the second part of \Cref{thm:agv-main} gives a precise measure of the failure of all rigid-analytic motives to be of good reduction.
In comparison to the first part, some additional hypotheses are necessary, for example \'etale-(hyper)sheafification and $\Q$-linearity.
All in all, \Cref{thm:agv-main} is a vast generalization of~\cite[Scholie~1.3.26.(1)]{ayoub:rigid-motives} which inspired the strategy in the first place.

\phantomsection
\addcontentsline{toc}{section}{References}
\bibliographystyle{alpha}
\bibliography{ref}

\end{document}